\newcommand{\BA}{\ensuremath{\mathbb {A}}\xspace}
\newcommand{\BC}{\ensuremath{\mathbb {C}}\xspace}
\newcommand{\BK}{\ensuremath{\mathbb {K}}\xspace}
\newcommand{\BP}{\ensuremath{\mathbb {P}}\xspace}
\newcommand{\BQ}{\ensuremath{\mathbb {Q}}\xspace}
\newcommand{\BR}{\ensuremath{\mathbb {R}}\xspace}
\newcommand{\BZ}{\ensuremath{\mathbb {Z}}\xspace}
\newcommand{\CL}{\ensuremath{\mathcal {L}}\xspace}
\newcommand{\CO}{\ensuremath{\mathcal {O}}\xspace}
\renewcommand{\div}{{\mathrm{div}}}
\DeclareMathOperator{\Gal}{Gal}
\newcommand{\lra}{\longrightarrow}
\renewcommand\a\alpha
\renewcommand\b\beta
\newcommand\g\gamma
\renewcommand\d\delta
\newcommand\D\Delta
\newcommand{\Berk}{\mathsf{P}^1}
\renewcommand\div{\textup{div}}
\newtheorem{theorem}[equation]{Theorem}
\newtheorem{prop}[equation]{Proposition}
\newtheorem{lemma}[equation]{Lemma}
\newtheorem{cor}[equation]{Corollary}
\newtheorem{thm}[equation]{Theorem}
\theoremstyle{definition}
\newtheorem{remark}[equation]{Remark}
\numberwithin{equation}{section}
\renewcommand{\to}{%
   \ifbool{@display}{\longrightarrow}{\rightarrow}%
   }
\let\shortmapsto\mapsto
\renewcommand{\mapsto}{%
   \ifbool{@display}{\longmapsto}{\shortmapsto}%
   }
\newlength{\olen}
\newlength{\ulen}
\newlength{\xlen}
\newcommand{\xra}[2][]{%
   \ifbool{@display}%
      {\settowidth{\olen}{$\overset{#2}{\longrightarrow}$}%
       \settowidth{\ulen}{$\underset{#1}{\longrightarrow}$}%
       \settowidth{\xlen}{$\xrightarrow[#1]{#2}$}%
       \ifdimgreater{\olen}{\xlen}%
          {\underset{#1}{\overset{#2}{\longrightarrow}}}%
          {\ifdimgreater{\ulen}{\xlen}%
             {\underset{#1}{\overset{#2}{\longrightarrow}}}
             {\xrightarrow[#1]{#2}}}}%
      {\xrightarrow[#1]{#2}}
   }
\newcommand{\xyra}[2][]{%
   \settowidth{\xlen}{$\xrightarrow[#1]{#2}$}%
   \ifbool{@display}%
      {\settowidth{\olen}{$\overset{#2}{\longrightarrow}$}%
       \settowidth{\ulen}{$\underset{#1}{\longrightarrow}$}%
       \ifdimgreater{\olen}{\xlen}%
          {\mathrel{\xymatrix@M=.12ex@C=3.2ex{\ar[r]^-{#2}_-{#1} &}}}%
          {\ifdimgreater{\ulen}{\xlen}%
             {\mathrel{\xymatrix@M=.12ex@C=3.2ex{\ar[r]^-{#2}_-{#1} &}}}
             {\mathrel{\xymatrix@M=.12ex@C=\the\xlen{\ar[r]^-{#2}_-{#1} &}}}}}%
      {\mathrel{\xymatrix@M=.12ex@C=\the\xlen{\ar[r]^-{#2}_-{#1} &}}}%
   }
\newcommand{\xla}[2][]{%
   \ifbool{@display}%
      {\settowidth{\olen}{$\overset{#2}{\longleftarrow}$}%
       \settowidth{\ulen}{$\underset{#1}{\longleftarrow}$}%
       \settowidth{\xlen}{$\xleftarrow[#1]{#2}$}%
       \ifdimgreater{\olen}{\xlen}%
          {\underset{#1}{\overset{#2}{\longleftarrow}}}%
          {\ifdimgreater{\ulen}{\xlen}%
             {\underset{#1}{\overset{#2}{\longleftarrow}}}
             {\xleftarrow[#1]{#2}}}}%
      {\xleftarrow[#1]{#2}}
   }
\newcommand{\isoarrow}{%
   \ifbool{@display}{\overset{\sim}{\longrightarrow}}{\xrightarrow\sim}%
   }
\renewcommand{\lra}{%
   \ifbool{@display}{\longleftrightarrow}{\leftrightarrow}%
   }
\begin{document}

\title{The Arakelov-Zhang pairing and Julia sets} 

\author[A. Bridy]{Andrew Bridy}
\address{Andrew Bridy\\Departments of Political Science and Computer Science\\ Yale University\\
New Haven, CT, 06511, USA}
\email{andrew.bridy@yale.edu}

\author[M. Larson]{Matt Larson}
\address{Matt Larson\\Department of Mathematics\\ Yale University\\
New Haven, CT, 06511, USA}
\email{matthew.larson@yale.edu}

\begin{abstract}
{The Arakelov-Zhang pairing $\langle\psi,\phi\rangle$ is a measure of the ``dynamical distance" between two rational maps $\psi$ and $\phi$ defined over a number field $K$. It is defined in terms of local integrals on Berkovich space at each completion of $K$. We obtain a simple expression for the important case of the pairing with a power map, written in terms of integrals over Julia sets. Under certain disjointness conditions on Julia sets, our expression simplifies to a single canonical height term; in general, this term is a lower bound. As applications of our method, we give bounds on the difference between the canonical height $h_\phi$ and the standard Weil height $h$, and we prove a rigidity statement about polynomials that satisfy a strong form of good reduction.}
\end{abstract}
\date{\today}

\maketitle

\section{Introduction}

Let $K$ be a number field with fixed algebraic closure $\overline{K}$. For $z\in\BP^1(\overline{K})$, we use $h(z)$ to denote the standard logarithmic Weil height of $z$ and $h_\phi(z)$ to denote the Call-Silverman canonical height of $z$ with respect to $\phi\in K(x)$. We recall background on these heights in Section~\ref{Background}.

Let $\psi,\phi:\BP^1\to\BP^1$ be rational maps defined over $K$ (equivalently, rational functions in $K(x)$) each of degree $\geq 2$. In~\cite{PST} (see also~\cite{Zhang}), Petsche, Szpiro, and Tucker introduced the Arakelov-Zhang pairing $\langle \psi,\phi\rangle$, a symmetric, non-negative, real-valued pairing on the space of rational maps. In Section~\ref{Background}, we present their definition of the pairing as a sum of integrals over Berkovich space at each place of $K$. Using~\cite[Theorem 11]{PST} and standard results on equidistribution of preimages, we also give a more intuitive equivalent definition as the limiting average of $h_\psi$ evaluated at the preimages under $\phi$ of any non-exceptional point $\beta\in\BP^1(\overline{K})$ (here ``exceptional" means that $\beta$ has finite backward orbit under $\psi$):
\begin{equation}\label{intuitive}
\langle \psi, \phi \rangle = \lim_{n \to \infty}\frac{1}{(\deg\phi)^n} \sum_{\phi^n(x)=\beta}h_\psi(x).
\end{equation}
The pairing can be understood as a ``dynamical distance" between $\psi$ and $\phi$. For example, by~\cite[Theorem 3]{PST}, $\langle\psi,\phi\rangle$ vanishes precisely when the canonical height functions $h_\psi$ and $h_\phi$ agree; this in turn holds if and only if the sets of preperiodic points of $\psi$ and $\phi$ coincide. Thus the specific pairing $\langle x^2,\phi\rangle$ may be interpreted as a measure of the dynamical complexity of $\phi$, as the height function $h_{x^2}$ equals the standard height $h$. As above, we have\begin{equation}\label{intuitive2}
\langle x^2, \phi \rangle = \lim_{n \to \infty}\frac{1}{(\deg\phi)^n} \sum_{\phi^n(x)=\beta}h(x).
\end{equation}
for non-exceptional $\beta$. 
 We note that $\langle x^d, \phi \rangle = \langle x^2, \phi \rangle$ for any $d \in \BZ \setminus \{-1, 0, 1\}$.

In this paper, we study the relationship between the Arakelov-Zhang pairing and Julia sets, both in the classical and the non-archimedean setting. We produce a formula for the pairing $\langle x^2,\phi\rangle$, which can be computed exactly under certain disjointness conditions on Julia sets.

Let $M_K$ be the set of places of $K$. For $\nu \in M_K$, let $r_{\nu} = [K_{\nu} : \BQ_{\nu}]/[K : \BQ]$, and let $\mu_{\phi,\nu}$ be the canonical $\phi$-invariant probability measure on the Berkovich projective line $\Berk$ over $\BC_\nu$ (see Section \ref{Background} for definitions). Our main theorem is as follows.
\begin{theorem}\label{main}
Let $\phi\in K(x)$. Then
$$\langle x^2, \phi \rangle = h_{\phi}(0) - \sum_{\nu \in M_K} r_{\nu} \int_{\vert \alpha \vert_{\nu} < 1} \log \vert \alpha \vert_{\nu} d \mu_{\phi,\nu}.$$
\end{theorem}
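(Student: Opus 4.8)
The plan is to compute the archimedean and non-archimedean local contributions to $\langle x^2,\phi\rangle$ directly from the definition of the pairing as a sum over $M_K$ of local integrals against the measures $\mu_{\phi,\nu}$ and $\mu_{x^2,\nu}$. At each place $\nu$, the local term of the Arakelov--Zhang pairing can be written, after the usual manipulation with local heights or Green's functions, as an integral of the form $\int_{\Berk} \log|T|_\nu\, d(\mu_{\phi,\nu}-\mu_{x^2,\nu})$ up to a canonically normalized correction, where $\mu_{x^2,\nu}$ is the Haar measure on the unit circle $|T|_\nu=1$ (in the archimedean case, arclength on $\{|z|=1\}$; non-archimedeanly, the Dirac mass at the Gauss point). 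The key elementary input is that $\log|T|_\nu$ integrates to $0$ against $\mu_{x^2,\nu}$: on the unit circle the average of $\log|z|$ is $0$, and $\log|T|_\nu$ vanishes at the Gauss point. Hence each local term collapses to $r_\nu\int_{\Berk}\log|\alpha|_\nu\,d\mu_{\phi,\nu}$ plus a term that, summed over all places, is forced by the product formula to reassemble into a global height.

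More precisely, the first step is to recall from Section~\ref{Background} that $\langle x^2,\phi\rangle = \sum_\nu r_\nu \lambda_\nu$ where $\lambda_\nu$ is the $\nu$-adic local pairing, and to identify $\lambda_\nu$ with the mutual-energy expression $-\int\!\!\int \log|x-y|_\nu\,d\mu_{\phi,\nu}(x)\,d\mu_{x^2,\nu}(y) + \tfrac12\iint\log|x-y|_\nu\,d\mu_{x^2,\nu}d\mu_{x^2,\nu}+\tfrac12\iint\log|x-y|_\nu\,d\mu_{\phi,\nu}d\mu_{\phi,\nu}$, i.e. the $\nu$-adic part of $\tfrac12\|\mu_{\phi,\nu}-\mu_{x^2,\nu}\|^2$ in the energy norm, together with the requisite normalizing constants tracking the behavior at $\infty$. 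The second step is to exploit that $\mu_{x^2,\nu}$ is the equilibrium measure of the closed unit disk, so its potential $u_{\mu_{x^2,\nu}}(z)=\int\log|z-y|_\nu\,d\mu_{x^2,\nu}(y)$ equals $\log^+|z|_\nu = \max(\log|z|_\nu,0)$ (and analogously on Berkovich space, with the value at the Gauss point being $0$). Substituting this, the cross terms and the self-energy of $\mu_{x^2,\nu}$ become explicit: $\int \log^+|z|_\nu\,d\mu_{\phi,\nu}$ appears, and on the region $|\alpha|_\nu\geq 1$ this is $\int\log|\alpha|_\nu d\mu_{\phi,\nu}$, while on $|\alpha|_\nu<1$ it contributes nothing — which is exactly how the truncated integral $\int_{|\alpha|_\nu<1}\log|\alpha|_\nu d\mu_{\phi,\nu}$ is singled out after one writes $\log|\alpha|_\nu = \log^+|\alpha|_\nu - \log^+|\alpha^{-1}|_\nu$ (equivalently $\log|\alpha|_\nu$ restricted to $|\alpha|_\nu<1$ is $\int\log|\alpha|_\nu - \int\log^+|\alpha|_\nu$ over the whole space).

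The third step is to recognize the leftover pieces as a canonical height. After the substitution, the sum over $\nu$ of $r_\nu$ times $\int\log^+|\alpha|_\nu\,d\mu_{\phi,\nu}$, minus the truncated piece, reorganizes into $\sum_\nu r_\nu\int\log|\alpha|_\nu\,d\mu_{\phi,\nu} - \sum_\nu r_\nu\int_{|\alpha|_\nu<1}\log|\alpha|_\nu d\mu_{\phi,\nu}$. The first of these sums is precisely the archimedean-plus-non-archimedean decomposition of $h_\phi(0)$: indeed, for the canonical measure $\mu_{\phi,\nu}$ one has $\int\log|z-\alpha|_\nu\,d\mu_{\phi,\nu}(\alpha) = g_{\phi,\nu}(z) + \text{const}$ where $g_{\phi,\nu}$ is the normalized $\phi$-adic Green's function, and $\sum_\nu r_\nu g_{\phi,\nu}(0)$ is exactly the canonical height $h_\phi(0)$ by the standard decomposition of Call--Silverman heights into local Green's functions (the constants cancel globally by the product formula applied to the leading coefficient of $\phi$). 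This yields $\langle x^2,\phi\rangle = h_\phi(0) - \sum_\nu r_\nu\int_{|\alpha|_\nu<1}\log|\alpha|_\nu\,d\mu_{\phi,\nu}$, as claimed.

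The main obstacle I anticipate is bookkeeping the additive constants correctly: the local Green's functions $g_{\phi,\nu}$ and the potentials of $\mu_{\phi,\nu}$ are only well-defined up to normalization, and the equilibrium potential $\log^+|z|_\nu$ of $\mu_{x^2,\nu}$ must be matched against the genuinely $\phi$-normalized objects. Getting the place-by-place constants to telescope via the product formula — so that no spurious global term survives — is the delicate point; the cleanest route is probably to phrase everything in terms of the local heights $\hat\lambda_{\phi,\nu}$ and $\hat\lambda_{x^2,\nu}=\log^+|{\cdot}|_\nu$ used in Section~\ref{Background}, for which the relation $\langle\psi,\phi\rangle = \sum_\nu r_\nu\int(\hat\lambda_{\psi,\nu}-\hat\lambda_{\phi,\nu})\,d(\mu_{\phi,\nu}-\mu_{\psi,\nu})$-type identity (or the equivalent statement via \cite[Theorem 11]{PST}) already has the constants built in, so that only the single clean identity $\int\log^+|z|_\nu\,d\mu_{x^2,\nu}=0$ and the height decomposition $h_\phi(0)=\sum_\nu r_\nu\hat\lambda_{\phi,\nu}(0)$ need to be invoked.
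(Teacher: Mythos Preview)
Your approach via mutual energy is a genuine alternative to what the paper does, but it is less direct, and the gap you worry about is real. The paper never passes through the energy pairing or Green's functions at all. It works straight from the definition of the local pairing via sections (Equation~\eqref{localpairing}): take $s(x_0,x_1)=x_0$ and $t(x_0,x_1)=x_0+x_1$, so that $\div(s)=0$ and $\div(t)=-1$. Because $x^2$ has good reduction everywhere, $\|s\|_{x^2,\epsilon,\nu}=\|s\|_{st,\nu}$ at every place, and one computes $\|s(a)\|_{st,\nu}=\min\{|a|_\nu,1\}$ directly. Hence $\int\log\|s\|_{x^2,\epsilon,\nu}\,d\mu_{\phi,\nu}=\int_{|\alpha|_\nu<1}\log|\alpha|_\nu\,d\mu_{\phi,\nu}$ on the nose, and $\log\|s(\div(t))\|=\log 1=0$. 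In the global formula (Equation~\eqref{globalpairing}) the height terms are $h_\phi(\div(s))=h_\phi(0)$ and $h_{x^2}(\div(t))=h_{x^2}(-1)=0$ since $-1$ is preperiodic for $x^2$. No constants are tracked and no product-formula telescoping occurs; the term $h_\phi(0)$ is inserted by the definition rather than assembled from local potentials.

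Your route can be made to work, but the specific claim in your third step is not correct as stated for general $\phi\in K(x)$: one has
\[
\sum_{\nu} r_\nu\int\log|\alpha|_\nu\,d\mu_{\phi,\nu}\;=\;h_\phi(0)-h_\phi(\infty),
\]
not $h_\phi(0)$. (Compare Theorem~\ref{main} with~\cite{PST}*{Proposition 16}, which gives $\langle x^2,\phi\rangle=h_\phi(\infty)+\sum_\nu r_\nu\int\log^+|\alpha|_\nu\,d\mu_{\phi,\nu}$; subtracting yields the identity above.) For polynomials $h_\phi(\infty)=0$ and your computation goes through, but for rational $\phi$ the missing $h_\phi(\infty)$ must come from the ``normalizing constants tracking the behavior at $\infty$'' that you allude to but do not compute. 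So your anticipated obstacle is exactly where the argument is incomplete: the self-energy of $\mu_{\phi,\nu}$ and the behavior of the kernel $\log|x-y|_\nu$ near $\infty$ together contribute $h_\phi(\infty)$, and you would need to carry this through explicitly. The paper's choice of sections sidesteps this entirely.
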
 

Theorem~\ref{main} can be viewed as a companion result to Proposition 16 in \cite{PST}, except with the role of $\infty$ and $0$ reversed. The integral is over the Berkovich open unit disk, but as $\mu_{\phi,\nu}$ is supported on the Julia set of $\phi$ in $\Berk$, we may interpret it as an integral over the Julia set. The integral evaluates to $0$ if $\phi$ has good reduction at the place $\nu$, so the sum over all $\nu\in M_K$ is actually a finite sum. 

We give two proofs of Theorem \ref{main}. The first proof requires extensive local analytic machinery as in \cite{PST}. The second proof is more elementary; it relies on our formula for the Arakelov-Zhang pairing as the average height of preimages of a non-exceptional point, and uses the fact that these preimages equidistribute in Berkovich space with respect to the canonical measure. By appealing to the equidistribution theorem, we are able to make an argument that avoids heavy use of the Berkovich space machinery. However, the second proof is only valid for monic polynomials whose $\nu$-adic Julia set does not contain $0$ at any place $\nu$. 

We give some consequences of Theorem~\ref{main} which are easier to state. Corollary~\ref{disjointcor} is a fundamental inequality between the pairing $\langle x^2,\phi\rangle$ and the canonical height $h_\phi(0)$, which are equal under a disjointness condition on Julia sets. See Section~\ref{Juliasection} for details on when this condition is satisfied.

\begin{cor}\label{disjointcor}
For any $\phi\in K(x)$,
$$\langle x^2, \phi \rangle \ge h_{\phi}(0),$$ with equality if and only if the Julia set of $\phi:\Berk\to\Berk$ is disjoint from the Berkovich open unit disk in $\Berk$ at every completion of $K$. 
\end{cor}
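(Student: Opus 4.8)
The plan is to read the corollary off of Theorem~\ref{main}. Subtracting $h_\phi(0)$ from both sides of the formula there gives
\[
\langle x^2, \phi\rangle - h_\phi(0) \;=\; -\sum_{\nu \in M_K} r_\nu \int_{|\alpha|_\nu < 1} \log|\alpha|_\nu \, d\mu_{\phi,\nu},
\]
so everything reduces to the sign of the right-hand side and to deciding when it vanishes.

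For the inequality, I would note that on the Berkovich open unit disk in $\Berk$ the continuously extended function $\log|\cdot|_\nu$ takes values in $[-\infty,0)$: by definition a point $\alpha$ lies in that disk exactly when $|\alpha|_\nu<1$, so its logarithm is strictly negative, and it equals $-\infty$ only at the classical point $0$, which $\mu_{\phi,\nu}$ does not charge since $\deg\phi\ge 2$. The integral $\int_{|\alpha|_\nu<1}\log|\alpha|_\nu\,d\mu_{\phi,\nu}$ is finite (this is already implicit in Theorem~\ref{main}) and $\le 0$, and since each $r_\nu>0$ the displayed right-hand side is $\ge 0$. Hence $\langle x^2,\phi\rangle \ge h_\phi(0)$.

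For the equality case, recall from the remark following Theorem~\ref{main} that the $\nu$-term vanishes at every place of good reduction, so the sum is finite. A finite sum of non-positive numbers, each weighted by a positive $r_\nu$, is zero if and only if each summand is zero, i.e.\ if and only if $\int_{|\alpha|_\nu<1}\log|\alpha|_\nu\,d\mu_{\phi,\nu}=0$ for all $\nu$. Writing the open unit disk as the increasing union of the closed disks $\{|\alpha|_\nu\le 1-1/n\}$, on each of which $\log|\cdot|_\nu$ is bounded above by a strictly negative constant, one sees that this integral is zero exactly when $\mu_{\phi,\nu}$ assigns measure zero to the open unit disk; and since that disk is open in $\Berk$, this is in turn equivalent to the support of $\mu_{\phi,\nu}$ being disjoint from it. Invoking the standard identification of $\supp\mu_{\phi,\nu}$ with the Berkovich Julia set of $\phi$ over $\mathbb{C}_\nu$ (recalled in Section~\ref{Juliasection}) then yields exactly the stated condition.

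I do not expect a genuine obstacle here: the corollary is essentially a restatement of Theorem~\ref{main} once the sign of $\log|\cdot|_\nu$ on the open unit disk is recorded, and the only external input is the identification of the support of the canonical measure with the Berkovich Julia set, which is a known fact that merely needs to be cited.
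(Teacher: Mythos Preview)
Your argument is correct and follows essentially the same route as the paper: both deduce the inequality from the sign of $\log|\cdot|_\nu$ on the open unit disk in Theorem~\ref{main}, and both characterize equality by arguing that the integral vanishes precisely when $\mu_{\phi,\nu}$ gives zero mass to that disk, hence when its support (the Julia set) misses it. Your exhaustion by closed disks is a slightly more explicit version of the paper's observation that a Julia set point in the open disk lies in some ball of radius $r<1$ of positive measure on which $\log|\alpha|_\nu$ is bounded away from zero.
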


Equality can occur in Corollary~\ref{disjointcor}. For example, Proposition~\ref{quad} implies that if $\phi(x) = x^2 + c$ with $c \in \mathbb{Z}$ and $\vert c \vert \ge 4$, then $\langle x^2, \phi \rangle = h_{\phi}(0)$.

Corollary~\ref{integercoeff} is the purely archimedean version of our main theorem, which becomes simpler (and requires no reference to Berkovich space) in the case that $\phi$ is a monic polynomial with integer coefficients. Here $\mu_\phi$ is the invariant measure on $\BP^1(\BC)$, the integral is over the complex unit disk, and the canonical height $h_\phi$ may be interpreted over $\BQ$ (or any number field).

\begin{cor} \label{integercoeff}
Let $\phi\in{\BZ}[x]$ be monic. Then
$$\langle x^2, \phi \rangle = h_{\phi}(0) - \int_{\vert z \vert < 1} \log \vert z \vert d \mu_{\phi}.$$
\end{cor}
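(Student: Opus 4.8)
The plan is to deduce this as a special case of Theorem~\ref{main}. The main point is that for a monic polynomial $\phi \in \BZ[x]$, the polynomial has good reduction at every finite place $\nu$ of $\BQ$: indeed, writing $\phi(x) = x^d + a_{d-1}x^{d-1} + \cdots + a_0$ with $a_i \in \BZ$, the reduction $\bar\phi \in \FF_p[x]$ remains a polynomial of the same degree $d$ (the leading coefficient $1$ never vanishes), so $\phi$ has good reduction at $p$ in the sense of dynamics on $\Berk$. By the remark following Theorem~\ref{main}, the integral $\int_{|\alpha|_\nu < 1} \log|\alpha|_\nu \, d\mu_{\phi,\nu}$ vanishes at every such place. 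Hence in the sum over $M_K = M_\BQ$ appearing in Theorem~\ref{main}, only the archimedean place survives, and there $r_\infty = 1$, $\BC_\infty = \BC$, $|\cdot|_\infty$ is the usual absolute value, and $\mu_{\phi,\infty}$ is the classical invariant measure $\mu_\phi$ on $\BP^1(\BC)$ (supported on the classical Julia set). The open Berkovich unit disk at the archimedean place is just $\{z \in \BC : |z| < 1\}$.

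Carrying this out: first I would recall (from Section~\ref{Background}) the equivalence between the good-reduction condition on coefficients and the statement that $\mu_{\phi,\nu}$ is the point mass at the Gauss point, so that $\log|\alpha|_\nu$ restricted to $|\alpha|_\nu < 1$ integrates to zero — in fact the support of $\mu_{\phi,\nu}$ is disjoint from the open unit disk entirely, since the filled Julia set of a good-reduction polynomial lies in the closed unit disk and the Julia set is its boundary. Second, I would invoke Theorem~\ref{main} and discard all finite places. Third, I would unwind the archimedean normalizations to rewrite the surviving term as $-\int_{|z|<1}\log|z|\,d\mu_\phi$, yielding
$$\langle x^2, \phi \rangle = h_\phi(0) - \int_{|z| < 1} \log|z| \, d\mu_\phi,$$
as claimed. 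The final sentence of the statement — that $h_\phi$ may be interpreted over $\BQ$ or any number field — is immediate since the canonical height of a fixed algebraic point is independent of the base field over which one computes it.

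The only genuinely substantive point is the good-reduction input, and this is standard: a monic integer polynomial has everywhere good reduction, and good reduction forces $\mu_{\phi,\nu}$ to be the Dirac mass at the Gauss point of $\Berk$, which has no mass on the open unit disk. I do not anticipate any real obstacle; the proof is essentially a matter of specializing Theorem~\ref{main} and checking that the archimedean normalization constants collapse as stated. If one wished to avoid citing Theorem~\ref{main} in its full Berkovich generality, one could instead invoke Corollary~\ref{disjointcor} together with an archimedean computation, but the direct specialization is cleaner.
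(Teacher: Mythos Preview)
Your proposal is correct and follows essentially the same approach as the paper's own proof: both argue that a monic integer polynomial has good reduction at every non-archimedean place, hence the Berkovich Julia set there is the Gauss point (disjoint from the open unit disk), so all non-archimedean terms in Theorem~\ref{main} vanish and only the archimedean integral survives. The paper's proof is simply a terser version of what you wrote.
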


\begin{remark}\label{algebraicintegers}
A similar statement to Corollary~\ref{integercoeff} holds if $\phi$ has algebraic integer coefficients, but the integral must be replaced with a sum of integrals corresponding to every possible embedding into $\BC$, and the statement is essentially that of Theorem~\ref{main} (though again with no reference to Berkovich space). See Section~\ref{Juliasection}. The restriction to algebraic integers is interesting from the dynamical point of view because of its connection to families of post-critically finite mappings. For example, in the family of complex quadratic polynomials $\phi_c(z)=z^2+c$, the mappings $\phi_c$ for which the critical point $0$ is preperiodic arise from certain algebraic integer parameters $c$ (the roots of the famous Gleason and Misiurewicz polynomials, see, e.g.,~\cite{Buff}).
\end{remark}

Our method also allows us to prove a rigidity statement about polynomials with certain reduction conditions by combining our work with a result of Kawaguchi-Silverman on maps with equal canonical height functions~\cite{Kawaguchi}.

\begin{thm}\label{rigidity}
Let $\phi(x) = x^d + a_{d-1} x^{d-1} + \dotsb + a_0\in \mathcal{O}_K[x]$, and assume that $0$ is preperiodic under $\phi$. Further suppose that the Julia set of $\phi$ at every archimedean place $\nu$ does not intersect the $\nu$-adic Berkovich open unit disk. Then $\phi(x)=x^d$.
\end{thm}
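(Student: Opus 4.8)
The plan is to show that the hypotheses force the Arakelov-Zhang pairing $\langle x^2,\phi\rangle$ to vanish, to conclude by \cite[Theorem 3]{PST} that $h_\phi$ equals the standard height $h=h_{x^d}$, and then to combine the Kawaguchi-Silverman rigidity theorem for maps with identical canonical heights \cite{Kawaguchi} with the monicity and integrality of $\phi$.

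First I would observe that a monic polynomial $\phi\in\mathcal{O}_K[x]$ of degree $d$ has good reduction at every non-archimedean place $\nu$ of $K$: its leading coefficient $1$ is a $\nu$-adic unit and its other coefficients are $\nu$-adic integers, so the reduction of $\phi$ still has degree $d$. For a map of good reduction the canonical measure $\mu_{\phi,\nu}$ is the Dirac mass at the Gauss point of $\Berk$ and the Berkovich Julia set of $\phi$ is that single point, which does not lie in the Berkovich open unit disk; equivalently $\int_{|\alpha|_\nu<1}\log|\alpha|_\nu\,d\mu_{\phi,\nu}=0$. Together with the hypothesis at the archimedean places, this shows that the Julia set of $\phi:\Berk\to\Berk$ is disjoint from the Berkovich open unit disk at every completion of $K$. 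Hence Corollary~\ref{disjointcor} gives $\langle x^2,\phi\rangle=h_\phi(0)$. Since $0$ is preperiodic under $\phi$, we have $h_\phi(0)=0$, so $\langle x^2,\phi\rangle=0$, and therefore $h_\phi=h_{x^2}=h$ by \cite[Theorem 3]{PST}. Equivalently, the set of preperiodic points of $\phi$ equals that of $x^d$, namely the roots of unity together with $0$ and $\infty$.

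Next I would feed the equality $h_\phi=h_{x^d}$ into the Kawaguchi-Silverman classification \cite{Kawaguchi}, which forces the polynomial $\phi$ to be linearly conjugate to a power map or to a Chebyshev polynomial. The Chebyshev case is impossible here, because the preperiodic points of a Chebyshev polynomial lie on a line segment, so no affine image of that set can contain all the roots of unity. Thus $\phi=\ell^{-1}\circ x^d\circ\ell$ for an affine map $\ell(z)=az+b$. Comparing the coefficients of $z^d$ on both sides gives $a^{d-1}=1$, so $a$ is a root of unity. Finally, since the set of preperiodic points of $\phi$ equals $\ell^{-1}$ applied to the set of roots of unity together with $0$ and $\infty$, and this set is already known to be the roots of unity together with $0$ and $\infty$, one checks (after fixing an embedding $\overline{K}\hookrightarrow\BC$, in which the closure of the set of roots of unity together with $0$ is the unit circle together with its center $0$) that the affine map $w\mapsto(w-b)/a$ must carry the unit circle to itself and $0$ to $0$; hence $b=0$, and then $\phi(z)=a^{d-1}z^d=z^d$.

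The main obstacle I anticipate is the invocation of the Kawaguchi-Silverman theorem and the ensuing conjugacy analysis: one has to cite it in the form valid for polynomial maps over a number field, confirm that the ``special'' maps that occur are exactly the power maps and Chebyshev polynomials on $\BP^1$, and verify carefully that the monic-integral hypotheses eliminate every nontrivial affine conjugate. By contrast, the identification of the Berkovich Julia set of a good-reduction map with the Gauss point is standard, and the passage from $\langle x^2,\phi\rangle=0$ to $h_\phi=h$ to the description of the preperiodic points is routine.
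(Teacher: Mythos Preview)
Your proof is correct and follows the same overall strategy as the paper: establish $\langle x^2,\phi\rangle=0$ via good reduction at finite places plus the archimedean hypothesis, deduce $h_\phi=h$ from \cite[Theorem 3]{PST}, and then appeal to Kawaguchi--Silverman. The only difference is in the last step: the paper invokes \cite[Corollary 25]{Kawaguchi} directly, which gives a \emph{simultaneous} conjugation $\psi^f(x)=x^n$ and $\phi^f(x)=\eta x^d$ with $\eta$ a root of unity; since $\psi=x^2$ forces $f=\mathrm{id}$, one immediately gets $\phi=\eta x^d$, and monicity gives $\eta=1$. You instead use the coarser dichotomy (power map vs.\ Chebyshev up to conjugacy) and then eliminate Chebyshev and pin down the conjugacy by a preperiodic-points/closure argument. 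Both routes are valid; the paper's is shorter because the sharper form of Kawaguchi--Silverman already encodes the information you extract by hand.
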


Petsche, Szipro, and Tucker also show that the pairing $\langle x^2, \phi \rangle$ can be used to give an upper bound on the difference between the canonical height of $\phi$ and the standard height. More precisely, they show the following theorem. 
\begin{theorem}{\cite{PST}*{Theorem 15}}\label{diffbound}
Let $\phi$ be a rational function of degree at least $2$ defined over a number field $K$. Then for any $z \in \BP^1(\overline{K})$,
$$h_{\phi}(z) - h(z) \le \langle x^2, \phi \rangle + h_{\phi}(\infty) + \log 2.$$
\end{theorem}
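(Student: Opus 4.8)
The plan is to split $h_\phi(z)-h(z)$ into local contributions, bound each by a potential integral against the canonical measure $\mu_{\phi,\nu}$, and then recognize the resulting sum via Theorem~\ref{main}. Fix a lift $\Phi=(F,G)\colon\mathbb{A}^2\to\mathbb{A}^2$ of $\phi$ with coefficients in $K$, and for each $\nu$ let $g_{\phi,\nu}(x,y)=\lim_n(\deg\phi)^{-n}\log\|\Phi^n(x,y)\|_\nu$ be the associated homogeneous local canonical height, where $\|(a,b)\|_\nu=\max(|a|_\nu,|b|_\nu)$; then $h_\phi([x:y])=\sum_\nu r_\nu g_{\phi,\nu}(x,y)$ and $h([x:y])=\sum_\nu r_\nu\log\|(x,y)\|_\nu$, so for $z\ne\infty$ (the case $z=\infty$ is immediate, since $h(\infty)=0$ and $\langle x^2,\phi\rangle\ge h_\phi(\infty)\ge0$)
$$h_\phi(z)-h(z)=\sum_\nu r_\nu\bigl(g_{\phi,\nu}(z,1)-\log^+|z|_\nu\bigr),$$
the sum running over $M_{K(z)}$ and regrouping into a sum over $M_K$ because $\mu_{\phi,\nu}$ and $g_{\phi,\nu}$ are unchanged under base extension. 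I would then use the standard Arakelov--Green representation $g_{\phi,\nu}(z,1)=\int_{\Berk}\log|z-w|_\nu\,d\mu_{\phi,\nu}(w)+g_{\phi,\nu}(1,0)$, with $\log|z-w|_\nu$ the $\infty$-based potential kernel on the Berkovich line over $\BC_\nu$ (the usual $\log|z-w|_\nu$ on classical points), the integral finite because $\mu_{\phi,\nu}$ has no atom at the classical point $z$. Since $\mu_{\phi,\nu}$ is a probability measure we may write $\log^+|z|_\nu=\int\log^+|z|_\nu\,d\mu_{\phi,\nu}(w)$, so
$$g_{\phi,\nu}(z,1)-\log^+|z|_\nu=\int_{\Berk}\bigl(\log|z-w|_\nu-\log^+|z|_\nu\bigr)\,d\mu_{\phi,\nu}(w)+g_{\phi,\nu}(1,0).$$

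The driving estimate is the pointwise inequality $\log|z-w|_\nu-\log^+|z|_\nu\le\log^+|w|_\nu+\epsilon_\nu\log2$ for all $w$ in the Berkovich line, where $\epsilon_\nu=1$ at archimedean $\nu$ and $0$ otherwise. For classical $z,w$ this is just $|z-w|_\nu\le2^{\epsilon_\nu}\max(1,|z|_\nu)\max(1,|w|_\nu)$, the ultrametric (resp. triangle) inequality applied to the determinant $z\cdot1-1\cdot w$, and it passes to all Berkovich $w$ by density of classical points; only $w\ne\infty$ matters, as $\mu_{\phi,\nu}(\{\infty\})=0$. Integrating against $\mu_{\phi,\nu}$, scaling by $r_\nu$, summing, and using $\sum_{\nu\mid\infty}r_\nu=1$ and $\sum_\nu r_\nu g_{\phi,\nu}(1,0)=h_\phi([1:0])=h_\phi(\infty)$, I obtain
$$h_\phi(z)-h(z)\ \le\ \sum_\nu r_\nu\int_{\Berk}\log^+|w|_\nu\,d\mu_{\phi,\nu}\ +\ h_\phi(\infty)\ +\ \log2.$$

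To identify the leading sum I apply Theorem~\ref{main} to $\tilde\phi(x)=1/\phi(1/x)$: conjugation by $x\mapsto1/x$ fixes $x^2$, hence $\langle x^2,\phi\rangle=\langle x^2,\tilde\phi\rangle$, while $h_{\tilde\phi}(0)=h_\phi(\infty)$ and $\mu_{\tilde\phi,\nu}$ is the pushforward of $\mu_{\phi,\nu}$ by $x\mapsto1/x$; the substitution $\alpha\mapsto1/\alpha$ turns the integral over $\{|\alpha|_\nu<1\}$ in Theorem~\ref{main} into $-\int_{\Berk}\log^+|w|_\nu\,d\mu_{\phi,\nu}$, so $\langle x^2,\phi\rangle=h_\phi(\infty)+\sum_\nu r_\nu\int_{\Berk}\log^+|w|_\nu\,d\mu_{\phi,\nu}$ (equivalently, this is Proposition~16 of~\cite{PST}). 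Substituting gives the slightly stronger bound $h_\phi(z)-h(z)\le\langle x^2,\phi\rangle+\log2$, and the asserted inequality follows because $h_\phi(\infty)\ge0$.

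The step I expect to be the main obstacle is the Arakelov--Green representation of $g_{\phi,\nu}$ — pinning down that the additive constant is precisely $g_{\phi,\nu}(1,0)$ and that the integral converges — together with checking that the determinant inequality of the second paragraph extends to non-classical points of the Berkovich line; this is the only point where genuine non-archimedean potential theory is invoked, and everything else is bookkeeping with the product formula and the defining properties of the canonical height.
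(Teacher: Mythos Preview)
The paper does not prove this statement; Theorem~\ref{diffbound} is quoted verbatim from \cite{PST}*{Theorem~15} and used only as input to the later height-difference bounds, so there is no in-paper argument to compare against.

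That said, your proof is essentially correct and is the natural potential-theoretic argument. The representation $g_{\phi,\nu}(z,1)=\int_{\Berk}\log\delta(z,w)_\infty\,d\mu_{\phi,\nu}(w)+g_{\phi,\nu}(1,0)$ is standard: both sides have Laplacian $\mu_{\phi,\nu}-\delta_\infty$ on $\Berk$ (for the right-hand side this uses that $\mu_{\phi,\nu}$ has continuous potentials, so the integral is finite and defines a function whose Laplacian is as claimed), hence they differ by a constant, and the constant is pinned down by the homogeneity relation $g_{\phi,\nu}(z,1)=\log|z|_\nu+g_{\phi,\nu}(1,1/z)$ together with $\int\log^+|w|_\nu\,d\mu_{\phi,\nu}<\infty$. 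Your concern about extending the inequality $\log\delta(z,w)_\infty\le\log^+|z|_\nu+\log^+|w|_\nu$ to non-classical $w$ is easily handled in the non-archimedean case: for $w=\zeta_{a,r}$ one has $\delta(z,\zeta_{a,r})_\infty=\max(|z-a|_\nu,r)$ and $|w|_\nu=\max(|a|_\nu,r)$, and the bound $\max(|z-a|_\nu,r)\le\max(1,|z|_\nu)\max(1,|a|_\nu,r)$ is immediate from the ultrametric inequality; type~II points are dense.

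One remark worth recording: your argument actually delivers the sharper bound
\[
h_\phi(z)-h(z)\ \le\ \langle x^2,\phi\rangle+\log 2,
\]
since after substituting $\sum_\nu r_\nu\int\log^+|w|_\nu\,d\mu_{\phi,\nu}=\langle x^2,\phi\rangle-h_\phi(\infty)$ (your last paragraph, i.e.\ \cite{PST}*{Proposition~16}) the two occurrences of $h_\phi(\infty)$ cancel. The additional $h_\phi(\infty)$ in the stated theorem is therefore redundant, being nonnegative.
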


The explicit nature of Theorem \ref{main} allows us to compute the pairing with some rational functions where the canonical measure is known explicitly (such as Chebyshev polynomials). For these examples, we then apply Theorem \ref{diffbound} to bound the difference between the Weil height and the canonical height. 

The paper is organized as follows. In Section 2, we recall some relevant background and define the Arakelov-Zhang pairing in terms of local integrals. We also prove Theorem \ref{main} and its corollaries. In Section 3, we recall some facts about Julia sets and prove Theorem \ref{rigidity}. In Section 4, we compute some explicit examples.

\subsection*{Acknowledgements}
We would like to thank Holly Krieger and Tom Tucker for numerous helpful conversations related to the topics in this paper. We also thank the referee for helpful comments and corrections, and for suggesting an improved argument for Lemma~\ref{rat}.

\section{Background}\label{Background}

We sketch the background needed to properly define the Arakelov-Zhang pairing, starting with a brief overview of height functions. See~\cite{LangDiophantine} for background and basic properties of height functions, and see~\cite{CallSilverman} for background on the canonical height function.

Let $K$ be a number field. The logarithmic height of $x\in K$ is defined by
$$
h(x) = \sum_{\nu\in M_K}r_\nu\log \max \{|x|_\nu, 1\},
$$
where $r_{\nu} = [K_{\nu} : \BQ_{\nu}]/[K : \BQ]$ as in the introduction. This definition immediately extends in a compatible way to any finite extension $K'$ of $K$ by the local-global degree formula, and so $h$ is a function $h:\overline{K}\to \BR$. We extend $h$ to a function $h:\BP^1(\overline{K})\to\BR$ by setting $h(\infty)=0$. See~\cite{BridyTucker} or~\cite{GNT} for an alternate but equivalent way of defining the height function.

Fix a rational function $\phi\in K(x)$ with $d=\deg\phi\geq 2$. We use $\phi^n$ to mean the $n$-fold composition of $\phi$ with itself. The Call-Silverman canonical height relative to $\phi$ is defined by
$$
h_\phi(x)=\lim_{n\to\infty} \frac{h(\phi^n(x))}{d^n}
$$
for all $x\in\BP^1(\overline{K})$. In~\cite{CallSilverman} it is shown that this limit exists, and its basic properties are established. Importantly, for all $x\in\BP^1(\overline{K})$,
\begin{align*}
h_\phi(\phi(x)) & =dh_\phi(x),\text{ and}\\
|h(x)-h_\phi(x)| & < C_\phi
\end{align*}
for an absolute constant $C_\phi$ (in fact, the canonical height is uniquely characterized by these two properties). The other property of the canonical height that we will use is that $h_\phi(x)=0$ if and only if $x$ is preperiodic for $\phi$, i.e., if $\phi^n(x)=\phi^m(x)$ for some $n>m\geq 0$. In the setting of number fields, this fact is a simple consequence of Northcott's theorem~\cite{Silverman}*{Theorem 3.22}. It also holds if $K$ is a function field and $\phi$ is not isotrivial, due to work of Benedetto~\cite{Benedetto} in the polynomial case and Baker~\cite{Bakerheights} for rational functions.

We say that a map $\phi\in K(x)$ has good reduction at a non-archimedean place $\nu$ if the degree of $\phi$ is unchanged after reducing the coefficients to the residue field $k_\nu$. To be precise, we must first write $\phi$ as a map $\BP^1\to\BP^1$ in homogeneous coordinates and choose a normalized form. See~\cite{MortonSilverman} or~\cite[Theorem 2.18]{Silverman} for details. For a polynomial
\[
\phi(x) = a_dx^d + a_{d-1}x^{d-1}+\dots + a_1x + a_0,
\]
good reduction has the simple interpretation that $\nu(a_d)= 0$ and $\nu(a_i)\geq 0$ for $0\leq i\leq d-1$. There are evidently only finitely many non-archimedean places $\nu\in M_K$ for which $\phi$ has bad reduction (i.e., does not have good reduction).

We recall some notation and terminology from~\cite{PST}. Let $\BK$ be either the complex numbers $\BC$ or the field $\BC_v$ which is the completion of the algebraic closure of $K_v$, and let $|\cdot|$ be the standard absolute value on $\BK$. Let $\Berk$ denote the Berkovich projective line over $\BK$; for $\BK=\BC$, $\Berk$ is simply $\BP^1(\BC)$. See~\cite{Baker} for background on Berkovich space in the context of dynamics.

Let $\phi:\BP^1\to\BP^1$ be a morphism of degree $d$ defined over $\BK$. A polarization $\epsilon$ of $\phi$ is an isomorphism $\epsilon:\CO(d)\stackrel{\sim}{\to}\phi^*\CO(1)$, where by $\CO(d)$ we mean $\CO_{\BP^1}(d)$. More concretely, a choice of a polarization is equivalent to a choice of a homogeneous lift of $\phi$ to a polynomial endomorphism $\Phi:\BA^2\to\BA^2$ (see~\cite[Section 2.1]{PST}). 

Recall that a metric on a line bundle $\CL$ is a non-negative, real-valued function on $\CL$ such that the restriction to each fiber $\CL_x$ is a norm on $\CL_x$ as a $\BK$-vector space. The standard metric $||\cdot||_{st}$ on $\CO(1)$ is characterized by the identity
$$
||s(x)||_{st}=\frac{|s(x_0,x_1)|}{\max\{|x_0|,|x_1|\}}
$$
on the fiber of $\CO(1)$ above $x=[x_0:x_1]$, for each section $s\in\Gamma(\BP^1,\CO(1))$ written as $s(x)=s(x_0,x_1)\in \BK[x_0,x_1]$. The canonical metric $||\cdot||_{\phi,\epsilon}$ is the limit as $k\to\infty$ of the sequence of metrics characterized by
\begin{align*}
||\cdot||_{\phi,\epsilon,0} &=||\cdot||_{st}\\
||\cdot||_{\phi,\epsilon,k+1}^{\otimes d}  &= \epsilon^*\phi^*||\cdot||_{\phi,\epsilon,k}
\end{align*}
for all $k$. Zhang showed that this limit exists as a bounded, continuous metric on $\CO(1)$~\cite{Zhang}. See~\cite{PST} for details on how the metric $||\cdot||_{\phi,\epsilon}$ depends on the polarization $\epsilon$.

The standard measure $\mu_{st}$ on the Berkovich projective line $\Berk$ over $\BK$ is the Haar measure on the unit circle for $\BK=\BC$, and the Dirac point mass at the Gauss point $\zeta_{0,1}$ for $\BK$ non-archimedean. The canonical invariant probability measure $\mu_\phi$ is defined as the weak limit as $k\to\infty$ of the sequence of measures given by
\begin{align*}
\mu_{\phi,0}&=\mu_{st}\\
\mu_{\phi,k+1}&=\frac{1}{d}\phi^*\mu_{\phi,k}
\end{align*}
for all $k$. The measure satisfies $\phi_*\mu_\phi=\mu_\phi$ and $\phi^*\mu_\phi=d \cdot \mu_\phi$. If $\beta$ is any non-exceptional point, then $\frac{1}{d^n}\sum_{\phi^n(\alpha)=\beta}\delta_\alpha$ converges weakly to $\mu_{\phi}$. 

Over $\BC$, the canonical invariant measure was constructed by Brolin for polynomial mappings~\cite{Brolin} and extended to rational functions by both Ljubich~\cite{Ljubich} and Freire-Lopes-Ma\~{n}\'e~\cite{invariant}. In the non-archimedean setting, the measure was introduced independently by Baker-Rumely~\cite{BakerRumely}, Chambert-Loir~\cite{ChambertLoir}, and Favre-Rivera-Letelier~\cite{FavreRiveraLetelier}. The measure $\mu_\phi$ can also be described as the unique $\phi$-invariant measure of maximal entropy $\log d$ (in particular, this equals the topological entropy of $\phi$). 

\begin{remark}
In the proofs of our main results, the map $\phi$ will be defined over a number field $K$. The metrics on $\CO(1)$ and measures on $\Berk$ that we have defined exist at every place $\nu$ of $K$, i.e., with $\BK=\BC_\nu$, and with $\phi$ considered as a map defined over $\BK$ under the embedding corresponding to $\nu$. When necessary, we will indicate the dependence on $\nu$ with an additional subscript, for example, $\vert\vert s(x)\vert\vert_{st,\nu}$ or $\mu_{\phi,\nu}$.
\end{remark}

The Julia set of $\phi$ can be defined in many equivalent ways. The most classical definition is that the Fatou set of $\phi$ is the locus on which the family of iterates $\{\phi^n\}_{n=1}^\infty$ is equicontinuous, and the Julia set is the complement of the Fatou set. The support of the invariant measure $\mu_\phi$ is precisely the Julia set of $\phi$ -- this was proved in~\cite{invariant} for $\BC$ and by Rivera-Letelier in the non-archimedean case (see~\cite{Baker}*{Theorem 10.56} for a writeup of the proof).

For the moment, let $\psi, \phi: \BP^1 \to \BP^1$ be defined over $\BK$. Fix a polarization $\epsilon$ of $\phi$. Let $s, t \in \Gamma(\BP^1, \CO(1))$ be sections with $\div(s) \not= \div(t)$. The local Arakelov-Zhang pairing is defined by
\begin{equation}\label{localpairing}
\langle \psi, \phi \rangle_{s, t} = \log \vert \vert s(\div(t)) \vert \vert_{\phi, \epsilon} - \int \log \vert \vert s(x) \vert \vert_{\phi, \epsilon} d \mu_{\psi}(x).
\end{equation}
Now let $\psi, \phi$ be defined over the number field $K$. For $\nu\in M_K$, the local pairing of $\psi$ and $\phi$ for $\BK=\BC_\nu$ is denoted $\langle \psi, \phi \rangle_{s, t, \nu}.$
The global Arakelov-Zhang pairing is then defined by
\begin{equation}\label{globalpairing}
\langle \psi, \phi \rangle = \sum_{\nu \in M_K} r_\nu \langle \psi, \phi \rangle_{s, t, \nu} + h_{\psi}(\div(s)) + h_{\phi}(\div(t)).
\end{equation}
As is clear from the notation, the local pairing does not depend on the choice of polarization $\epsilon$, and the global pairing does not depend on the choice of sections $s$ and $t$ (see~\cite{PST}).

\begin{remark}
The Arakelov-Zhang pairing may be understood as a dynamical distance, but it is not a metric on the space of rational maps. However, as observed by Fili, it coincides with the square of a metric of mutual energy defined on a space of adelic measures~\cite{Fili}. These measures have associated canonical height functions, which in the dynamical setting agree with the Call-Silverman canonical height. This connection has been fruitful in studying ``unlikely intersection" problems of some relation to the questions studied in this paper, e.g., the recent work in~\cite{DKY} on uniform Manin-Mumford.
\end{remark}

As mentioned in the introduction, the formula given for $\langle\psi,\phi\rangle$ in Equation~\ref{intuitive} follows from the main results of~\cite{PST} combined with a result on equidistribution of preimages of a non-exceptional point. We prove this as Proposition~\ref{averageheightsofpreimages}. First we recall a theorem of Petsche-Szpiro-Tucker.
\begin{theorem}\cite{PST}*{Theorem 1}\label{heightlimit}
Let $\psi$ and $\phi$ be rational functions defined over a number field $K$. Let $\{x_n\} \in \BP^1(\overline{K})$ be a sequence of distinct points such that $h_{\phi}(x_n) \to 0$. Then $h_{\psi}(x_n) \to \langle \psi, \phi \rangle$. 
\end{theorem}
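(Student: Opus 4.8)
The statement to prove is Theorem~\ref{heightlimit} of Petsche--Szpiro--Tucker: if $h_\phi(x_n)\to 0$ for a sequence of distinct points $x_n\in\BP^1(\overline K)$, then $h_\psi(x_n)\to\langle\psi,\phi\rangle$. Since the paper is quoting this as an external result, a proof proposal should reconstruct its argument from the definition of the pairing given in Equation~\eqref{globalpairing} together with an equidistribution theorem.

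\medskip

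\textbf{Plan.} The idea is to express $h_\psi(x_n)$ as a sum of local contributions, recognize each local term as an integral of $\log\|s(\cdot)\|_{\psi,\epsilon,\nu}$ against a point mass, and then pass to the limit using equidistribution of the Galois orbits of the $x_n$. First I would fix a section $s\in\Gamma(\BP^1,\CO(1))$, say with $\div(s)=[1:0]=\infty$, and choose a polarization of $\psi$. By the theory of canonical heights attached to metrized line bundles (Zhang), one has for any $z\in\BP^1(\overline K)$ not in $\Supp(\div(s))$ the decomposition
\begin{equation*}
h_\psi(z) = \frac{1}{[K(z):\BQ]}\sum_{w}\;\sum_{z'}\;-\log\|s(z')\|_{\psi,\epsilon,w},
\end{equation*}
where $w$ runs over places of $K(z)$ (or one works adelically over $\overline K$) and $z'$ runs over the Galois conjugates of $z$; equivalently, writing $\delta_{[z]} = \frac{1}{[K(z):\BQ]}\sum_{z'}\delta_{z'}$ for the probability measure on $\BP^1(\BC_\nu)$ (or $\Berk_\nu$) equidistributed on the conjugates of $z$, one gets
\begin{equation*}
h_\psi(z) = -\sum_{\nu\in M_K} r_\nu \int \log\|s(x)\|_{\psi,\epsilon,\nu}\, d\delta_{[z],\nu}(x).
\end{equation*}
This is just the statement that the canonical height is the archimedean-plus-nonarchimedean height attached to the canonical adelic metric on $\CO(1)$, applied to the point $z$. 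I would cite Zhang's adelic metric formalism (or~\cite{CallSilverman} combined with the local decomposition in~\cite{PST}) for this identity.

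\medskip

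\textbf{Equidistribution and the limit.} Next, since $h_\phi(x_n)\to 0$ and the $x_n$ are distinct, the hypotheses of the arithmetic equidistribution theorem (Baker--Rumely, Chambert-Loir, Favre--Rivera-Letelier in the non-archimedean case; Bilu / Szpiro--Ullmo--Zhang in the archimedean case) are met: at every place $\nu$ the measures $\delta_{[x_n],\nu}$ converge weakly to the canonical measure $\mu_{\phi,\nu}$. Therefore, for each fixed $\nu$,
\begin{equation*}
-\int\log\|s(x)\|_{\psi,\epsilon,\nu}\,d\delta_{[x_n],\nu} \;\longrightarrow\; -\int\log\|s(x)\|_{\psi,\epsilon,\nu}\,d\mu_{\phi,\nu},
\end{equation*}
provided $\log\|s(\cdot)\|_{\psi,\epsilon,\nu}$ is continuous at the relevant points; the canonical metric is continuous, and $\log\|s(x)\|$ has a logarithmic singularity only along $\div(s)=\infty$, which is handled by the standard device of choosing $\infty$ outside the support of $\mu_\phi$ and, if needed, perturbing or using the equality of the pairing for different sections. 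Comparing the right-hand side with the definition~\eqref{localpairing}: $-\int\log\|s(x)\|_{\psi,\epsilon,\nu}\,d\mu_{\phi,\nu}$ is $\langle\phi,\psi\rangle_{s,t,\nu}-\log\|s(\div(t))\|_{\phi,\epsilon,\nu}$ with $\psi$ and $\phi$ interchanged relative to~\eqref{localpairing}; summing over $\nu$ with weights $r_\nu$ and reassembling the $h$-terms via~\eqref{globalpairing} (and using symmetry of the global pairing, $\langle\psi,\phi\rangle=\langle\phi,\psi\rangle$, proved in~\cite{PST}) yields exactly $\langle\psi,\phi\rangle$. Finally, to exchange the place-by-place limit with the sum over $\nu$ I would note that only finitely many places contribute nontrivially: at a place $\nu$ of good reduction for both $\psi$ and $\phi$ (all but finitely many), the canonical metric equals the standard metric, $\mu_{\phi,\nu}$ is the Gauss point mass, and $\delta_{[x_n],\nu}$ is supported on the closed unit disk for all large $n$ once we normalize; the local term is then identically the ``naive'' contribution and matches in the limit. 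A uniform bound $\big|\int\log\|s\|_{\psi,\epsilon,\nu}\,d\delta_{[x_n],\nu}\big|\le C_\nu$ with $\sum_\nu r_\nu C_\nu<\infty$ (available from the comparison $\|h-h_\psi\|\le C_\psi$ placed locally) justifies dominated convergence over $M_K$.

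\medskip

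\textbf{Main obstacle.} The delicate point is the interchange of limit and integral at the archimedean and bad places when $\div(s)$ or $\div(t)$ meets the support of $\mu_\phi$ or accumulates near a conjugate of some $x_n$: $\log\|s(x)\|$ is only upper semicontinuous (it is $-\infty$ on $\div(s)$), so weak convergence of measures does not immediately give convergence of the integrals. The standard fix, which I would carry out, is to choose the sections so that $\div(s)$ and $\div(t)$ avoid the Julia sets (possible since the pairing is independent of $s,t$), reducing to integrating a genuinely continuous function; and to control the contribution of conjugates $x_n'$ that stray close to $\div(s)$ by a potential-theoretic energy estimate, using again $h_\phi(x_n)\to 0$ to bound how much mass $\delta_{[x_n]}$ can place near any fixed point. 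This is precisely the technical heart of~\cite{PST}*{Theorem 1}, and I would either reproduce their quantitative estimate or invoke the ``continuity of heights along equidistributing sequences'' packaged in Favre--Rivera-Letelier's and Baker--Rumely's work.
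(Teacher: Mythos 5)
This statement is quoted verbatim from \cite{PST}*{Theorem 1}; the paper you were given contains no proof of it, so there is nothing internal to compare against. Your reconstruction does follow the same strategy as the cited source: decompose $h_\psi(x_n)$ into local integrals of $-\log\|s(\cdot)\|_{\psi,\epsilon,\nu}$ against the Galois-orbit measures $\delta_{[x_n],\nu}$, apply the arithmetic equidistribution theorem of Baker--Rumely, Chambert-Loir, and Favre--Rivera-Letelier place by place to replace $\delta_{[x_n],\nu}$ by $\mu_{\phi,\nu}$, and reassemble the limit into the global pairing of Equation~\ref{globalpairing}. You have also correctly located the technical heart of the argument, namely the singularity of $\log\|s\|$ along $\div(s)$ and the interchange of the limit with the sum over places.

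Two of your proposed fixes deserve a warning, since they are exactly where the real work lies. First, ``choose the sections so that $\div(s)$ and $\div(t)$ avoid the Julia sets'' is not always available: for a Latt\`es map the archimedean Julia set is all of $\BP^1(\BC)$, so no such choice exists, and one is forced into your alternative route --- the quantitative, energy-theoretic form of equidistribution, which allows test functions with logarithmic singularities precisely because $h_\phi(x_n)\to 0$ prevents the orbit measures from concentrating mass near any fixed point. Second, the passage from place-by-place convergence to convergence of the full adelic sum is not a routine dominated convergence: at a place of good reduction the local term $\int\log\max\{|x|_\nu,1\}\,d\delta_{[x_n],\nu}$ need not vanish (the conjugates of $x_n$ need not be $\nu$-integral there), and there is no $n$-independent summable majorant in sight. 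What actually controls the tail is an adelic energy inequality bounding the \emph{total} discrepancy $\sum_\nu r_\nu\int(\cdots)\,d(\delta_{[x_n],\nu}-\mu_{\phi,\nu})$ in terms of $h_\phi(x_n)$; this is the content of the Favre--Rivera-Letelier/Baker--Rumely machinery that \cite{PST} invoke, and your sketch defers it rather than supplying it. As an outline of the argument in the source your proposal is faithful, but as a proof it is incomplete at exactly these two points.
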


Recall that an exceptional point $\beta\in\BP^1(\overline{K})$ of a rational map $\phi:\BP^1\to\BP^1$ is a point such the set of all $x\in\BP^1(\overline{K})$ such that $\phi^n(x)=\beta$ for some $n\geq 1$ (the backward orbit of $\beta$) is a finite set. It is not hard to show that, if $\beta$ is exceptional for $\phi$, then up to conjugacy by M\"obius transformations, either $\phi$ is a polynomial and $\beta=\infty$ or $\phi(x)=x^d$ for $d\in\BZ$ and $\beta\in\{0,\infty\}$.

\begin{prop}\label{averageheightsofpreimages}
Let $\psi, \phi :\BP^1\to\BP^1$ be rational maps defined over a number field $K$. Suppose $\beta\in\BP^1(\overline{K})$ is not exceptional for $\phi$, and let $d$ be the degree of $\phi$. Then 
$$
\langle \psi, \phi \rangle = \lim_{n \to \infty}\frac{1}{d^n} \sum_{\phi^n(x)=\beta}h_\psi(x),
$$
where the summation is counted with multiplicity.
\end{prop}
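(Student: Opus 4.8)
The plan is to deduce Proposition~\ref{averageheightsofpreimages} from Theorem~\ref{heightlimit} together with the equidistribution of preimages of a non-exceptional point. First I would observe that if $\beta$ is not exceptional for $\phi$, then the full backward orbit $\bigcup_{n\geq 0}\phi^{-n}(\beta)$ is an infinite set, so we can extract from it a sequence of \emph{distinct} points $\{x_n\}$. Moreover every point $x$ with $\phi^n(x)=\beta$ for some $n$ satisfies $h_\phi(x) = d^{-n} h_\phi(\beta) \to 0$ as $n\to\infty$; hence any enumeration of the backward orbit into distinct points has $h_\phi(x_n)\to 0$. Applying Theorem~\ref{heightlimit} gives $h_\psi(x_n)\to\langle\psi,\phi\rangle$ for \emph{every} such enumeration, which is exactly the statement that $h_\psi$ tends to $\langle\psi,\phi\rangle$ along the net of points in the backward orbit of $\beta$.

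The next step is to upgrade this "pointwise along a sequence" statement to the statement about Cesàro-type averages over the level sets $\phi^{-n}(\beta)$. Fix $\varepsilon>0$. Since $h_\phi(x)\to 0$ uniformly on $\phi^{-n}(\beta)$ as $n\to\infty$ (indeed $\max_{x\in\phi^{-n}(\beta)} h_\phi(x) = d^{-n}h_\phi(\beta)$), and since Theorem~\ref{heightlimit} says any sequence of distinct points with $h_\phi\to 0$ has $h_\psi\to\langle\psi,\phi\rangle$, I claim there is $N$ such that for all $n\geq N$ and all $x\in\phi^{-n}(\beta)$ that are \emph{not} in the (bounded, finite) set of ramified-or-repeated preimages accumulated so far, $|h_\psi(x)-\langle\psi,\phi\rangle|<\varepsilon$. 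More carefully: the number of $x\in\phi^{-n}(\beta)$ that already appeared as an element of some $\phi^{-m}(\beta)$ with $m<n$, or at which the map is ramified, is $o(d^n)$ — in fact $\phi^{-n}(\beta)$ has exactly $d^n$ points with multiplicity, and the proportion that coincide with earlier preimages or are critical goes to $0$. For the remaining points, all but finitely many satisfy $|h_\psi(x)-\langle\psi,\phi\rangle|<\varepsilon$ by Theorem~\ref{heightlimit} applied to any enumeration. To control the finitely many "bad" points uniformly, note $|h_\psi(x)-h(x)|<C_\psi$ and $|h(x)-h_\phi(x)|<C_\phi$ from Section~\ref{Background}, so $h_\psi(x)\leq h_\phi(x)+C_\phi+C_\psi \leq d^{-n}h_\phi(\beta)+C_\phi+C_\psi$, a bound independent of which preimage $x$ is. Hence the contribution of the $o(d^n)$ bad points to $\frac{1}{d^n}\sum_{\phi^n(x)=\beta}h_\psi(x)$ is $o(1)$, while the good points contribute $\langle\psi,\phi\rangle + O(\varepsilon)$. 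Letting $n\to\infty$ and then $\varepsilon\to 0$ yields the claimed limit.

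The main obstacle I anticipate is the bookkeeping in the second step: one must handle the counting-with-multiplicity correctly (critical points of $\phi^n$ and collisions between level sets $\phi^{-m}(\beta)$ and $\phi^{-n}(\beta)$), and one must get a \emph{uniform} upper bound for $h_\psi$ on the whole backward orbit so that the sparse exceptional set contributes negligibly after dividing by $d^n$. The uniform bound is cheap because $h_\phi(x) = d^{-n}h_\phi(\beta)$ is tiny and $h_\psi \leq h_\phi + C_\phi + C_\psi$; the combinatorial estimate that the "old or ramified" preimages form an $o(d^n)$ fraction follows since $\phi^n$ has at most $2(d^n-1)$ critical points by Riemann–Hurwitz and $\bigcup_{m<n}\phi^{-m}(\beta)$ has at most $\sum_{m<n}d^m = O(d^{n-1})$ points, both negligible against $d^n$. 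Everything else is a direct appeal to Theorem~\ref{heightlimit}; alternatively, one could phrase the second step as a weak-convergence argument using that $\frac{1}{d^n}\sum_{\phi^n(x)=\beta}\delta_x$ equidistributes to $\mu_\phi$ on Berkovich space and that $\langle\psi,\phi\rangle = h_\psi(\div(s)) + h_\phi(\div(t)) + \sum_\nu r_\nu\langle\psi,\phi\rangle_{s,t,\nu}$ matches the limit of the local height averages, but the sequential argument above is the more elementary route and is the one I would write up.
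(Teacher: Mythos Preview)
Your overall strategy matches the paper's: use Theorem~\ref{heightlimit} to deduce that only finitely many distinct points $\alpha_1,\dots,\alpha_k$ in the backward orbit of $\beta$ can satisfy $|h_\psi(\alpha_i)-\langle\psi,\phi\rangle|\geq\varepsilon$, then show their total contribution to the average $\frac{1}{d^n}\sum_{\phi^n(x)=\beta}h_\psi(x)$ is $o(1)$. Your uniform bound $h_\psi(x)\leq d^{-n}h_\phi(\beta)+C_\phi+C_\psi$ is also correct and useful.

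The gap is in your multiplicity control. You assert that the ``old or ramified'' preimages form an $o(d^n)$ fraction because ``$\phi^n$ has at most $2(d^n-1)$ critical points by Riemann--Hurwitz,'' but $2(d^n-1)\sim 2d^n$ is \emph{not} $o(d^n)$. Worse, the underlying claim that ramified preimages are sparse is simply false. Take $\phi(x)=x^2-2$ and $\beta=2$ (a non-exceptional fixed point): every $x\in\phi^{-n}(2)\setminus\{2,-2\}$ has its forward orbit pass through the critical point $0$ before reaching $2$, so is a ramification point of $\phi^n$; thus the ramified preimages account for $2^n-2$ of the $2^n$ terms. Your trichotomy old/ramified/new-unramified therefore cannot isolate a set of ``good'' points of density $1$, and the argument collapses.

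What is actually needed---and what the paper proves---is that each \emph{individual} bad point $\alpha_i$ appears in the multiset $\phi^{-n}(\beta)$ with multiplicity $o(d^n)$. The paper does this by fixing an embedding $\overline K\hookrightarrow\BC$ and invoking Ljubich's result that the canonical measure $\mu_\phi$ on $\BP^1(\BC)$ has no point masses; since $\frac{1}{d^n}\sum_{\phi^n(x)=\beta}\delta_x\to\mu_\phi$ weakly, the mass at any fixed $\alpha$ tends to $0$. Combined with the finiteness of $\{\alpha_1,\dots,\alpha_k\}$, this gives that the bad terms have total multiplicity $o(d^n)$, and the rest of your argument goes through. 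If you want to stay elementary and avoid Ljubich, you can argue directly: when $\beta$ is not periodic the level sets $\phi^{-n}(\beta)$ are pairwise disjoint, so each $\alpha_i$ lies in at most one of them; when $\beta$ has period $p$, the multiplicity of $\alpha_i\in\phi^{-n_0}(\beta)$ in $\phi^{-(n_0+kp)}(\beta)$ equals $e_{\phi^{n_0}}(\alpha_i)\cdot e_{\phi^p}(\beta)^k$, and $e_{\phi^p}(\beta)<d^p$ because $\beta$ is not exceptional. Either route works, but the global Riemann--Hurwitz count does not.
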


\begin{proof}
 If, for all $n$, there are precisely $d^n$ points $x$ with $\phi^n(x)=\beta$, then the claim follows directly from Theorem~\ref{heightlimit}. In order to prove the Proposition in general, we need to show that the points in the multiset $\{\phi^n(x)=\beta\}$ cannot occur with too large a multiplicity as $n\to\infty$. Fix an embedding $\overline{K}\hookrightarrow\BC$. In particular, we may view $\phi$ as a rational function with complex coefficients under this embedding. Since $\beta$ is a non-exceptional point, the $n$th preimages of $\beta$ equidistribute along the complex Julia set of $\phi$ under the canonical measure $\mu_\phi$.

It follows from~\cite{Ljubich}*{Theorem 4} that $\mu_\phi$ has no point masses (the existence of point masses would violate the ``balanced measure" condition). Thus the number of times any $\alpha\in\BP^1(\BC)$ occurs in the multiset of $d^n$ preimages of $\beta$ is $o(d^n)$ as $n \to \infty$. 

Fix  $\varepsilon>0$. Then there is $N$ such that there are only finitely many $\alpha$ with $h_{\phi}(\alpha) < h_{\phi}(\beta)d^{-N}$ and $\vert h_{\psi}(\alpha) - \langle \psi, \phi \rangle \vert > \varepsilon$, since otherwise we could find a sequence of distinct points $\{x_\ell\}$ such that $h_{\phi}(x_\ell) \to 0$ but $h_{\psi}(x_\ell)$ does not tend to $\langle \psi, \phi \rangle$, which would violate Theorem~\ref{heightlimit}. Fix such an $N$, and call these finitely many points $\alpha_1, \alpha_2, \dotsc, \alpha_k$. For all $n> N$ sufficiently large, these $\alpha_i$ will occur at most $d^n \varepsilon/ \max \{\vert h_{\psi}(\alpha_i) - \langle \psi, \phi \rangle \vert\}$ times as $n$th preimages of $\beta$. For such $n$, we see that 
$$\left \vert \langle \psi, \phi \rangle  - \frac{1}{d^n} \sum_{\phi^n(\alpha) = \beta} h_{\psi}(\alpha) \right \vert < 2 \varepsilon.$$
The claim follows.
\end{proof} 

We now proceed to proofs of the main theorems in the introduction.The first proof  uses the definition of the Arakelov-Zhang pairing given in Equation \ref{globalpairing}. The proof is somewhat similar to the proof of Proposition 16 in \cite{PST}. We give a second, more elementary proof to illustrate the utility of Proposition~\ref{averageheightsofpreimages}. To avoid complications, we assume that $\phi$ is a monic polynomial whose Julia set at any place does not contain $0$. 

\begin{proof}[First proof of Theorem \ref{main}]
Let $\phi:\BP^1\to\BP^1$ be defined over $K$. Fix homogeneous coordinates $x_0, x_1$ on $\BP^1$, and set $x=[x_0:x_1]$. Consider the map $\BP^1 \to \BP^1$ given by $x^2$ (i.e., $[x_0:x_1]\mapsto [x_0^2 : x_1^2]$), and choose the polarization $\epsilon$ that yields its homogeneous lift $\Phi(x_0,x_1)=(x_0^2,x_1^2)$.

The squaring map has good reduction at all non-archimedean places of $K$, so by \cite{PST}*{Proposition 6} we have
\[
\vert\vert s(x)\vert\vert_{x^2,\epsilon,\nu}=\vert\vert s(x)\vert\vert_{st,\nu}
\]
for each non-archimedean place $\nu$. This equality also holds at each archimedean place -- simply observe that the identity in Equation 13 of~\cite{PST}*{Proposition 6} is clearly true for $\nu$ archimedean, and the rest of the argument goes through verbatim.

To compute the local pairing, we choose sections $s,t\in \Gamma(\BP^1(\BC_\nu), \CO(1))$. Let $s(x_0, x_1) = x_0  $ and $t(x_0, x_1) = x_0 + x_1$. We compute $\div(s) = [0 : 1]\text{ and }\div(t) = [1 : -1]$. Identify $\BP^1$ with $\BK \cup \{[1 : 0]\}$ in the usual way, with $a \in \BK$ corresponding to $[a : 1]$. Then for any place $\nu\in M_K$,
$$ \vert \vert s(a) \vert \vert_{x^2,\epsilon,\nu} = \vert \vert s(a) \vert \vert_{st,\epsilon,\nu} = \frac{\vert a \vert_\nu}{\max\{\vert a \vert_\nu, 1\}} = \min\{ \vert a \vert_\nu, 1\}.$$
Therefore,
$$\int \log \vert \vert s(x) \vert \vert_{x^2,\epsilon,\nu} d \mu_{\phi,\nu}(x) = \int_{\vert \alpha \vert_\nu < 1} \log \vert \alpha \vert_\nu d \mu_{\phi, \nu}(\alpha).$$
We compute $\vert\vert s(\div(t))\vert\vert_{x^2,\epsilon}=\vert\vert s(\div(t))\vert\vert_{st,\epsilon}=1$. So for any place $\nu$,
 $$\langle x^2, \phi \rangle_{s, t, \nu} = -\int_{\vert \alpha \vert_{\nu} < 1} \log \vert \alpha \vert_{\nu} d \mu_{\phi, \nu}(\alpha)$$
 by the definition of the local pairing in Equation~\ref{localpairing}.
  
Now observe that $h_{\phi}(\div(s)) = h_{\phi}(0)$ and $h_{x^2}(\div(t))=0$, as $[1: -1]$ is preperiodic under the map $x^2$. Using Equation~\ref{globalpairing}, we compute
\begin{align*}
\langle x^2, \phi \rangle &  = \sum_{\nu \in M_K} r_\nu \langle x^2, \phi \rangle_{s, t, \nu} + h_{x^2}(\div(t)) + h_{\phi}(\div(s))\\
 & = h_\phi(0) - \sum_{\nu \in M_K} r_\nu\int_{\vert \alpha \vert_{\nu} < 1} \log \vert \alpha \vert_{\nu} d \mu_{\phi, \nu}(\alpha),
\end{align*}
as claimed.
\end{proof}

\begin{proof}[Second proof of Theorem \ref{main}]
Assume that $\phi\in K[x]$ is a monic polynomial whose $\nu$-adic Julia set does not contain $0$ at any place $\nu$. Let $S \subseteq M_K$ be a finite set of places of $K$ containing both the archimedean places and the places where $\phi$ has bad reduction. Let $\mathcal{O}_{K,S}$ be the ring of $S$-integers of $K$, i.e.,
\[
\CO_{K,S}=\{z\in K:\nu(z)\geq 0\text{ for }\nu\notin S\}.
\]
Choose  $\beta \in \CO_{K,S}$ such that 
\begin{itemize}
\item $\beta$ is not exceptional for $\phi$, and
\item for all $\nu \in S$, $\beta$ is not contained in the closure of $\{\phi^n(0)\}$ in the topology defined by $\nu$.
\end{itemize}


We do some preliminary computations that will allow us to compute the average height of the preimages of $\beta$. Let $L$ denote the splitting field of $\phi^n(x) - \beta$, and let $\nu \in M_K$. Recall that $r_{\nu} = \frac{[K_{\nu} : \BQ_{\nu}]}{[K:\BQ]}$. Using that $\phi$ is monic, we compute
\begin{equation*} \begin{split}
  \sum_{\omega \in M_L, \omega \mid \nu} \frac{[L_{\omega} : \BQ_{\nu}]}{[L: \BQ]} &\sum_{\phi^n(\alpha) = \beta} \log \max\{ \vert \alpha \vert_{\omega}, 1\} \\
  & =  \sum_{\omega \in M_L, \omega \mid \nu} \frac{[L_{\omega} : \BQ_{\nu}]}{[L: \BQ]}  \left( \sum_{\phi^n(\alpha) = \beta} \log \vert \alpha \vert_{\omega} - \sum_{\phi^n(\alpha) = \beta, \vert \alpha \vert_{\omega} < 1} \log \vert \alpha \vert_{\omega} \right) \\
& = \sum_{\omega \in M_L, \omega \mid \nu} \frac{[L_{\omega} : \BQ_{\nu}]}{[L: \BQ]}  \left( \log \vert \phi^n(0) - \beta \vert_{\omega} - \sum_{\phi^n(\alpha) = \beta, \vert \alpha \vert_{\omega} < 1} \log \vert \alpha \vert_{\omega} \right) \\
& = r_{\nu} \log \vert \phi^n(0) - \beta \vert_{\nu} - \sum_{\omega \in M_L, \omega \mid \nu} \frac{[L_{\omega} : \BQ_{\nu}]}{[L: \BQ]} \sum_{\phi^n(\alpha) = \beta, \vert \alpha \vert_{\omega} < 1} \log \vert \alpha \vert_{\omega},
\end{split} \end{equation*}
where we use that for any $\alpha \in K$, the extension formula implies that $$\prod_{\omega \in M_L, \omega \mid \nu} \vert \alpha \vert_{\omega}^{r_{\omega}} = \vert \alpha \vert_{\nu}^{r_{\nu}}.$$ 

Fix a completion of the algebraic closure of $K_{\nu}$, $\BC_{\nu}$, with an absolute value that extends $\vert \cdot \vert_{\nu}$. There is a distinguished place $\omega$ on $L$ extending $\nu$ such that $\omega$ agrees with the valuation on $\BC_{\nu}$. Since $G = \Gal(L/K)$ acts transitively on the set of valuations above $\nu$, every valuation $\omega'$ on $L$ above $\nu$ is of the form $\vert \cdot \vert_{\omega'} = \vert \sigma( \cdot) \vert_{\omega}$ for some $\sigma \in G$. Let $D_{\omega}$ denote the decomposition group of $\omega$, i.e., the stabilizer of $\omega$ in $G$. Then the set of valuations of $L$ above $\nu$ is isomorphic as a $G$-set to $G/D_{\omega}$. 
Note that $$\vert G/D_{\omega} \vert = \frac{\vert G \vert}{[L_{\omega} : K_{\nu}]} = \frac{[L : K]}{[L_{\omega} : K_{\nu}]}.$$ Thus we may write
\begin{equation*} 
\sum_{\omega \in M_L, \omega \mid \nu} \frac{[L_{\omega} : \BQ_{\nu}]}{[L: \BQ]} \sum_{\phi^n(\alpha) = \beta, \vert \alpha \vert_{\omega} < 1} \log \vert \alpha \vert_{\omega} = \frac{[L_{\omega} : \BQ_{\nu}]}{[L : \BQ]} \sum_{g\in G/D_{\omega}} \sum_{\phi^n(\alpha) = \beta, \vert \alpha \vert_{g \omega} < 1} \log \vert \alpha \vert_{g \omega},
\end{equation*}
where we identify $G/D_\omega$ with a given choice of a left transversal. For each $\alpha$ with $\vert \alpha \vert_{\omega} < 1$ and each $g \in G/D_{\omega}$, the term $\log \vert \alpha \vert_{\omega}$ appears exactly once in the inner sum. Therefore
$$\sum_{\omega \in M_L, \omega \mid \nu} \frac{[L_{\omega} : \BQ_{\nu}]}{[L: \BQ]} \sum_{\phi^n(\alpha) = \beta, \vert \alpha \vert_{\omega} < 1} \log \vert \alpha \vert_{\omega} =
\frac{[K_{\nu} : \BQ_{\nu}]}{[K : \BQ]} \sum_{\phi^n(\alpha) = \beta, \vert \alpha \vert_{ \omega} < 1} \log \vert \alpha \vert_{\omega}.$$
For $\omega \in M_L$ lying above $\nu \not \in S$, we see that if $\phi^n(\alpha) = \beta$, then $\vert \alpha \vert_{\omega} \le 1$. Thus if $\phi^n(\alpha) = \beta$, then 
$$h(\alpha) = \sum_{\nu \in S} \sum_{\omega \in M_L, \omega \mid \nu} r_{\omega} \log \max \{\vert \alpha \vert_{\omega}, 1\}.$$ 
Now we can compute the average height of the preimages of $\beta$ under $\phi^n$ as follows:
\begin{equation*} \begin{split}
 \frac{1}{d^n} \sum_{\phi^n(\alpha) = \beta} h(\alpha) & = \frac{1}{d^n} \sum_{\nu \in S}  \sum_{\omega \in M_L, \omega \mid \nu} \frac{[L_{\omega} : \BQ_{\nu}]}{[L : \BQ]} \sum_{\phi^n(\alpha) = \beta} \log \max \{\vert \alpha \vert_{\omega},1\} \\
& =  \sum_{\nu \in S} \left( \frac{r _{\nu} \log \vert \phi^n(0) - \beta \vert_{\nu}}{d^n} - r_{\nu}  \frac{1}{d^n} \sum_{\phi^n(\alpha) = \beta, \vert \alpha \vert_{\nu} < 1}  \log \vert \alpha \vert_{\nu} \right).
\end{split} \end{equation*}
We claim that
\[
h(\phi^n(0) - \beta)
= \sum_{\nu\in S} \left (r_\nu \log \vert \phi^n(0) - \beta \vert_{\nu} \right) + O(1).
\]
Indeed, for $\nu \not \in S$, $r_{\nu} \log \max \{\vert \phi^n(0) - \beta \vert_{\nu}, 1\} = 0$ as $\phi^n(0)$ and $\beta$ both have non-negative valuation. For the finitely many $\nu \in S$, the choice of $\beta$ guarantees that $\vert \phi^n(0) - \beta \vert_{\nu}$ is bounded below independently of $n$, so $r_\nu \log \vert \phi^n(0) - \beta \vert_{\nu} - r_\nu \log \max \{\vert \phi^n(0) - \beta \vert_{\nu}, 1\} = O(1)$. Therefore,
$$ \frac{1}{d^n} \sum_{\phi^n(\alpha) = \beta} h(\alpha) =  \frac{ h(\phi^n(0) - \beta)}{d^n} - \sum_{\nu \in S} \left( r_{\nu} \frac{1}{d^n} \sum_{\phi^n(\alpha) = \beta, \vert \alpha \vert_{\nu} < 1}  \log \vert \alpha \vert_{\nu} \right) + O(1/d^n).$$

Now we take the limit of the average height of preimages as $n \to \infty$. The left-hand side approaches $\langle x^2,\phi \rangle$ by Proposition~\ref{averageheightsofpreimages}. On the right-hand side, the first term approaches $h_{\phi}(0)$ by basic properties of heights. As $S$ is finite, we may interchange the limit with the sum over $\nu\in S$ in the second term. Thus we arrive at
$$
\langle x^2,\phi\rangle = h_\phi(0) -  \sum_{\nu \in S} r_{\nu}\lim_{n\to\infty}  \frac{1}{d^n} \sum_{\phi^n(\alpha) = \beta, \vert \alpha \vert_{\nu} < 1}  \log \vert \alpha \vert_{\nu}.
$$
By assumption, the Julia set of $\phi$ at every completion $\nu$ of $K$ does not include $0$. Therefore the function $\alpha \mapsto \log \vert \alpha \vert_{\nu}$ is a bounded continuous function on the support of the canonical invariant measure $\mu_{\phi,\nu}$. By the weak convergence of the average of point masses $\frac{1}{d^n}\sum_{\phi^n(\alpha)=\beta}\delta_\alpha$ to the canonical measure,
we have that
$$\lim_{n \to \infty} \frac{1}{d^n} \sum_{\phi^n(\alpha) = \beta, \vert \alpha \vert_{\nu} < 1} \log \vert \alpha \vert_{\nu} = \int_{\vert \alpha \vert_{\nu} < 1} \log \vert \alpha \vert_{\nu} d \mu_{\phi, \nu}$$
and we obtain the formula
$$\langle x^2, \phi \rangle = h_{\phi}(0) - \sum_{\nu \in S} r_{\nu} \int_{\vert \alpha \vert_{\nu} < 1} \log \vert \alpha \vert_{\nu} d \mu_{\phi,\nu}.$$
For $\nu \not \in S$, note that $$\int_{\vert \alpha \vert_{\nu} < 1} \log \vert \alpha \vert_{\nu} d\mu_{\phi, \nu} = 0,$$
as $\mu_{\nu, \phi}$ is supported at the Gauss point $\zeta_{0, 1}$. 
Therefore the integrals in the statement of Theorem~\ref{main} vanish for $\nu\notin S$, so we are done.
\end{proof}

\begin{remark}
We include the second proof to show that, using Proposition~\ref{averageheightsofpreimages}, in some cases it is possible to prove Theorem~\ref{main} without invoking much of the fairly technical local analytic machinery. The main difficulty in extending the argument from polynomials to rational functions is that it is no longer necessarily true that there is a finite set $S\subseteq M_K$ such that the heights of preimages can be computed by only looking at local contributions from places in $S$. In particular, there are contributions from places other than the archimedean places and places of bad reduction.
\end{remark}

\begin{proof}[Proof of Corollary~\ref{disjointcor}]
The Julia set is the support of the canonical measure. Thus if the Julia set is disjoint from the Berkovich open unit disk for a valuation $\nu$, then $$\int_{\vert \alpha \vert_{\nu} < 1} \log \vert \alpha \vert_{\nu} d\mu_{\phi, \nu} = 0,$$ so $\langle x^2, \phi \rangle = h_{\phi}(0)$. 

Suppose the Julia set of $\phi$ at some valuation $\nu$ intersects the Berkovich open unit disk. Then the Julia set intersects the open ball of radius $r$ for some $r < 1$. The measure of this ball is then positive, and we have that $\log \vert \alpha \vert < -\varepsilon < 0$ for some $\varepsilon$ on this ball. Thus $$\int_{\vert \alpha \vert_{\nu} < 1} \log \vert \alpha \vert_{\nu} d\mu_{\phi, \nu} < 0,$$ implying the result. 
\end{proof}

\begin{proof}[Proof of Corollary \ref{integercoeff}]
Let $\nu\in M_K$ be a non-archimedean place. If $\phi$ has good reduction at $\nu$, then the Julia set in Berkovich space is the Gauss point, hence is disjoint from the Berkovich open unit disk. A monic polynomial with integer coefficients has good reduction at every such $\nu$, and we are done.
\end{proof}

\section{Applications to Julia sets}\label{Juliasection}

First, we give two conditions that guarantee that 
$$\int_{\vert \alpha \vert_{\nu} < 1} \log \vert \alpha \vert_{\nu} d \mu_{\phi,\nu} = 0.$$ 
The first condition is good reduction. 
If $\phi$ is a rational function with good reduction at $\nu$, then the Julia set of $\phi$ at $\nu$ is the Gauss point $\zeta_{0, 1}$~\cite{Baker}*{Chapter 10.5}. Since $\zeta_{0, 1}$ is not contained in the Berkovich open unit disk, the integral vanishes if we have good reduction. Note that, as rational functions have good reduction away from finitely many places, this implies that only finitely many of the terms in the sum in Theorem \ref{main} are nonzero. 

Our second condition is the hypothesis of Lemma~\ref{rat} for $\phi$ a polynomial.
\begin{lemma}\label{rat}
Let $\phi(x) = x^n + a_{n-1} x^{n-1} + \dotsb + a_0$ be a polynomial with coefficients in a number field $K$, and let $\nu$ be a non-archimedean valuation which satisfies the condition 
\begin{equation*} \label{condition}
\text{$\nu(a_0) \leq 0$ and $\nu(a_0) < \nu(a_i)$ for every $i$.} 
\end{equation*} Then the Julia set of $\phi$ in the $\nu$-adic Berkovich space $\Berk$ does not intersect the open unit disk.
\end{lemma}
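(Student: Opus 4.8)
The goal is to show that the condition $\nu(a_0)\leq 0$ and $\nu(a_0)<\nu(a_i)$ for all $i$ forces the Julia set of $\phi$ to avoid the Berkovich open unit disk. The natural approach is to analyze the dynamics of $\phi$ on $\Berk$ directly, controlling the valuation (equivalently the $\nu$-adic size) of points under iteration. Write $|\cdot|=|\cdot|_\nu$ and let $|a_0|=\rho$; the hypothesis says $\rho\geq 1$ and $|a_i|<\rho$ for every $i$ (including, since $\phi$ is monic, $|a_n|=1<\rho$). I would first examine the image of a point $x$ with $|x|<1$: since every $a_i$ with $i\geq 1$ satisfies $|a_i x^i|\leq |a_i|<\rho=|a_0|$, the ultrametric inequality gives $|\phi(x)|=|a_0|=\rho\geq 1$. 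So $\phi$ maps the open unit disk off of itself. More is true and is what one actually needs: I want to show the open unit disk lies in the Fatou set, specifically in a (possibly preperiodic) attracting or wandering-to-infinity component, so that no point of it is in the Julia set.

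\textbf{Key steps.} (1) Reduce to the Berkovich classical points or, better, work with the standard description: a Type I point $x$ with $|x|<1$ has $|\phi(x)|=\rho$ as above; a point $x$ with $|x|=\rho$ has $|\phi(x)|\geq \rho^n$ when $\rho>1$ strictly (leading term dominates, as $|a_i x^i|=|a_i|\rho^i<\rho\cdot\rho^i\leq\rho^n$... one should check the inequality $|a_i|\rho^i<\rho^n$ carefully, using $|a_i|<\rho$ and $\rho\geq 1$, so $|a_i|\rho^i<\rho^{i+1}\leq\rho^n$), and more generally for $|x|>1$ one gets $|\phi(x)|=|x|^n$ so iterates escape to $\infty$. (2) Handle the boundary case $\rho=1$ separately: then $|a_0|=1$ and $|a_i|<1$ for $i\geq 1$, and $\phi$ reduces mod $\nu$ to the constant $\bar a_0\neq 0$, so the reduction has degree $0$; here $\phi$ has "maximal bad reduction" and I would argue that the open unit disk maps into the circle $|x|=1$, then track where that goes — this needs slightly more care, possibly invoking that a disk on which $\phi$ is constant mod $\nu$ maps into a single residue disk, hence lies in the Fatou set by a normal-families / equicontinuity argument (Rivera-Letelier's criterion that the Julia set is contained in the "activity locus"). (3) Package this into the statement that the open unit disk is disjoint from the Julia set, using that the Julia set is the support of $\mu_\phi$ and is a closed $\phi$-invariant set on which $\phi$ is "chaotic"; since we have exhibited the open unit disk as contained in a domain whose forward orbit is eventually in the domain $|x|>1$ (where $\phi^k\to\infty$ locally uniformly, hence the family of iterates is normal/equicontinuous), the open unit disk lies in the Fatou set.

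\textbf{Main obstacle.} The archimedean-style "escape to infinity" argument is clean when $\rho>1$: one shows $|\phi(x)|\geq\rho>1$ for $|x|<1$ and then $|\phi^k(x)|\to\infty$, so these points are in the basin of the superattracting fixed point at $\infty$, hence Fatou. The genuinely delicate case is $\rho=1$, i.e. $\nu(a_0)=0$ and $\nu(a_i)>0$ for all $i\geq 1$: then $\phi$ does not obviously push the unit disk to infinity, and one must instead argue that the open unit disk (or a forward image of it) lands inside a single residue disk on which $\phi$ acts as a small perturbation of a constant, so that the family of iterates is equicontinuous there. I expect the referee-suggested argument referenced in the acknowledgements handles exactly this point, likely by observing that $\phi(\{|x|<1\})\subseteq\{|x-a_0|<1\}$, a disk of radius $1$ about a point on the unit circle, and then either that disk is again mapped into a small disk (allowing an inductive "the iterates never spread out" argument) or it eventually reaches $\{|x|>1\}$ and escapes; in all cases the conclusion is that the open unit disk meets only the Fatou set. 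I would structure the final write-up around the dichotomy $\rho>1$ versus $\rho=1$, doing the former by the direct escape estimate and the latter by the residue-disk equicontinuity argument.
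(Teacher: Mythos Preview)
Your escape-to-infinity argument for the case $\rho=|a_0|_\nu>1$ is exactly what the paper does: for $|\zeta|_\nu<1$ the ultrametric gives $|\phi(\zeta)|_\nu=|a_0|_\nu>1$, and for $|\xi|_\nu\geq|a_0|_\nu$ one has $|\phi(\xi)|_\nu=|\xi|_\nu^n$, so the open unit disk lies in the basin of $\infty$.

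However, you have misread the boundary case $\rho=1$, and this is where your proposal goes astray. You claim that when $\nu(a_0)=0$ and $\nu(a_i)>0$ for $i\geq 1$ the reduction of $\phi$ is the constant $\bar a_0$, giving ``maximal bad reduction.'' But $\phi$ is \emph{monic}: the leading coefficient $1$ is a $\nu$-adic unit and survives reduction, so $\bar\phi(x)=x^n+\bar a_0$, which has degree $n$. Thus $\phi$ has \emph{good} reduction in this case, the Julia set is the Gauss point $\zeta_{0,1}$, and the lemma follows immediately. (Your parenthetical ``including, since $\phi$ is monic, $|a_n|=1<\rho$'' already hints at the confusion: the leading coefficient is not among the $a_i$ constrained by the hypothesis, and in the case $\rho=1$ it does not satisfy $1<\rho$.) The paper dispatches this case in a single line before turning to $\rho>1$.

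So the ``genuinely delicate case'' you anticipate does not exist, and the residue-disk equicontinuity argument you sketch is unnecessary. Once you recognize that $\nu(a_0)=0$ forces good reduction, the entire proof reduces to the clean escape estimate you already have for $\nu(a_0)<0$.
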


\begin{proof}
If $\nu(a_0)=0$, then $\phi$ has good reduction and we are done. Therefore we may assume that $\nu(a_0)< 0$ by our condition on the coefficients of $\phi$.

Let $D=\{\zeta\in{\bf A}^1_\nu : |\zeta|_\nu<1\}$ be the Berkovich open unit disk, where $|\zeta|_\nu$ is interpreted in the Berkovich sense (the seminorm $[\cdot]_\zeta$ evaluated at the polynomial $x$). In order to show that $J(\phi)$ does not intersect $D$, it suffices to show that $D$ is in the basin of attraction of the attracting fixed point at $\infty$. 

Suppose $|\zeta|_\nu<1$. By our assumption on the coefficients and the ultrametric inequality, we have $|\phi(\zeta)|_\nu=|a_0|_\nu>1$. If any $\xi\in {\bf A}^1_\nu$ satisfies $|\xi|_\nu\geq |a_0|_\nu>1$, then again by the ultrametric we have $|\phi(\xi)|_\nu=|\xi|_\nu^n$. Iterating $\phi$, we find that $|\phi^m(\xi)|_\nu=|\xi|_\nu^{n^m}$, and this expression tends to $\infty$ as $m$ increases. Taking $\xi=\phi(\zeta)$, we see that $\phi(\zeta)$ is in the basin of attraction of $\infty$; thus $\zeta$ is as well.
\end{proof}

Let $\phi\in K[x]$ be a monic polynomial such that, for every non-archimedean place $\nu$, either $\phi$ has good reduction at $\nu$ or $\phi$ satisfies the hypothesis of Lemma~\ref{rat}. 
For example, this criterion applies to every $\phi$ in the quadratic family $x^2 + c$. Lemma~\ref{rat} implies that 
\begin{equation}\label{noBerk}
\langle x^2, \phi \rangle = h_\phi(0) - \sum_{\nu \text{ archimedean}} r_{\nu} \int_{\vert \alpha \vert_{\nu} < 1} \log \vert \alpha \vert_{\nu} d \mu_{\phi, \nu}.
\end{equation}
Equation~\ref{noBerk} is a slight generalization of Theorem~\ref{integercoeff}. Note that the hypothesis holds in the case that $\phi$ has coefficients in the ring of integers $\CO_K$, as mentioned in Remark~\ref{algebraicintegers}.

We now prove Theorem~\ref{rigidity} using a result of Kawaguchi and Silverman on polynomials with equal canonical height functions~\cite{Kawaguchi}.
\begin{proof}[Proof of Theorem \ref{rigidity}]
By Theorem~\ref{main}, 
\begin{equation*} 
 \langle x^2, \phi \rangle = h_{\phi}(0) - \sum_{\nu \in M_K} r_{\nu} \int_{\vert \alpha  \vert_{\nu} < 1} \log \vert \alpha \vert_{\nu} d \mu_{\phi,\nu}.
 \end{equation*}
We have $0$ preperiodic for $\phi$ and so $h_\phi(0)=0$. The hypotheses imply that the support of $\mu_{\phi,\nu}$ is disjoint from the Berkovich open unit disk for each $\nu\in M_K$ (for $\nu$ non-archimedean, this is because the coefficients are integral). Therefore $\langle x^2, \phi \rangle=0$. By~\cite{PST}*{Theorem 3}, the canonical heights $h_{x^2}$ and $h_{\phi}$ are equal. 

By~\cite{Kawaguchi}*{Corollary 25}, if $\psi,\phi$ are polynomials of degree $\geq 2$ with $h_\psi=h_\phi$, then there exist a linear polynomial $f$ and a root of unity $\eta$ such that $\psi^f(x)=x^n$ and $\phi^f(x)=\eta x^d$ (where $\psi^f$ means $f^{-1}\circ \psi\circ f$). In our case where $\psi(x)=x^2$, the only possibility is that  $f(x)=x$ and $n=2$, which imply $\eta=1$. Thus $\phi(x)=x^d$.
\end{proof}
\begin{remark}
It might seem that we could strengthen Theorem~\ref{rigidity} by weakening the hypothesis to allow $\phi$ to either have good reduction at $\nu$ or satisfy the hypothesis of Lemma~\ref{rat}. However, the argument of Lemma~\ref{rat} shows that if $\nu(a_0)<0$ then $0$ is in the basin of attraction of $\infty$, and in particular is not preperiodic. So if Lemma~\ref{rat} is satisfied and $0$ is preperiodic, then $\phi$ has good reduction.
\end{remark}

\section{Explicit computations}

Because of the explicit nature of Theorem~\ref{main}, we are able to compute the Arakelov-Zhang pairing in several situations, extending the computations in \cite{PST}. 

First, we compute the Arakelov-Zhang pairing between $x^2$ and the Chebyshev polynomials $T_n(x)$, an important family of polynomials in dynamics. The polynomial $T_n$ is characterized by the equation $T_n(x + x^{-1}) = x^n + x^{-n}$ (see, e.g., \cite{Silverman} for basic facts about Chebyshev polynomials).

\begin{prop}\label{cheb}
Let $T_n(x)$ be a Chebyshev polynomial for $n \ge 2$. Then  $$\langle x^2, T_n \rangle = -\frac{1}{2 \pi} \int_{-1}^{1} \frac{ \log \vert x \vert}{\sqrt{1 - x^2/4}} dx = \frac{3 \sqrt{3}}{4 \pi} L(2, \chi) \approx 0.3231,$$ where $L(s, \chi)$ is the Dirichlet $L$-function associated to the nontrivial character with modulus $3$. 
\end{prop}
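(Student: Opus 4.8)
The plan is to apply Corollary~\ref{integercoeff} and then evaluate the resulting integral explicitly. Each $T_n$ is monic with integer coefficients, so Corollary~\ref{integercoeff} gives $\langle x^2,T_n\rangle = h_{T_n}(0) - \int_{|z|<1}\log|z|\,d\mu_{T_n}$. The height term vanishes: since $T_n$ has integer coefficients and carries the interval $[-2,2]$ into itself (because $T_n(2\cos\theta)=2\cos(n\theta)$), the forward orbit $\{T_n^k(0)\}_{k\ge 0}$ lies in $\BZ\cap[-2,2]$, hence is finite, so $0$ is preperiodic for $T_n$ and $h_{T_n}(0)=0$. Explicitly, $T_n(0)=2\cos(n\pi/2)\in\{0,2,-2\}$, and $2,-2$ are fixed or mapped to the fixed point $2$.

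Next I would identify the canonical measure. It is classical that the Julia set of $T_n$ is the real segment $[-2,2]$ and that, via the semiconjugacy $x=2\cos\theta$ carrying $T_n$ to $\theta\mapsto n\theta$ on $\BR/2\pi\BZ$, the measure $\mu_{T_n}$ is the pushforward of normalized Haar measure on the circle, i.e.\ the arcsine distribution $d\mu_{T_n}=\dfrac{dx}{2\pi\sqrt{1-x^2/4}}$ supported on $[-2,2]$; I would cite this. Since $\mu_{T_n}$ has no atoms and $[-2,2]\cap\{|z|<1\}=(-1,1)$, we obtain $\int_{|z|<1}\log|z|\,d\mu_{T_n}=\dfrac{1}{2\pi}\int_{-1}^{1}\dfrac{\log|x|}{\sqrt{1-x^2/4}}\,dx$, the integral being absolutely convergent (logarithmic singularity at $0$, bounded weight on $[-1,1]$). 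This establishes the first equality of the Proposition.

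For the closed form, substitute $x=2\cos\theta$, which turns $\langle x^2,T_n\rangle$ into $-\tfrac{1}{\pi}\int_{\pi/3}^{2\pi/3}\log|2\cos\theta|\,d\theta$, and insert the Fourier expansion $\log|2\cos\theta|=\sum_{k\ge 1}\tfrac{(-1)^{k+1}}{k}\cos 2k\theta$. Because this function lies in $L^2$ of the circle (its Fourier coefficients are square-summable), its partial sums converge to it in $L^2$, hence in $L^1$ of the interval $[\pi/3,2\pi/3]$, which justifies term-by-term integration across the logarithmic singularity at $\theta=\pi/2$. Using $\int_{\pi/3}^{2\pi/3}\cos 2k\theta\,d\theta=-\tfrac1k\sin\tfrac{2k\pi}{3}$ gives $\langle x^2,T_n\rangle=\tfrac1\pi\sum_{k\ge1}\tfrac{(-1)^{k+1}}{k^2}\sin\tfrac{2k\pi}{3}$. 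Now $\sin\tfrac{2k\pi}{3}=\tfrac{\sqrt3}{2}\chi(k)$, where $\chi$ is the nontrivial Dirichlet character mod $3$, so $\langle x^2,T_n\rangle=\tfrac{\sqrt3}{2\pi}\sum_{k\ge1}\tfrac{(-1)^{k+1}\chi(k)}{k^2}$; separating the even terms, $\sum_{k\ge1}\tfrac{(-1)^{k+1}\chi(k)}{k^2}=L(2,\chi)-2\sum_{m\ge1}\tfrac{\chi(2m)}{(2m)^2}=\bigl(1-\tfrac{\chi(2)}{2}\bigr)L(2,\chi)=\tfrac32 L(2,\chi)$ since $\chi(2)=-1$. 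Hence $\langle x^2,T_n\rangle=\tfrac{3\sqrt3}{4\pi}L(2,\chi)$, and inserting the numerical value $L(2,\chi)\approx 0.7813$ yields the stated approximation.

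I do not expect a genuine obstacle here; the argument is a computation built on Corollary~\ref{integercoeff}. The two points needing care are: obtaining the explicit form of $\mu_{T_n}$ (for which I would give a precise reference to the literature on Chebyshev dynamics, or derive it directly from the semiconjugacy, noting that $T_n$ has everywhere good or Lemma~\ref{rat}-type reduction so that only the archimedean place contributes), and the term-by-term integration of the Fourier series past $\theta=\pi/2$, which the $L^2$-convergence on the circle handles cleanly.
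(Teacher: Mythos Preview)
Your argument is correct. The setup---applying Corollary~\ref{integercoeff}, noting that $0$ is preperiodic for $T_n$, and identifying $\mu_{T_n}$ as the arcsine measure on $[-2,2]$---is essentially identical to the paper's proof, and you obtain the first equality in the same way.

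The difference lies in how you extract the closed form $\tfrac{3\sqrt{3}}{4\pi}L(2,\chi)$. The paper substitutes $x=2\sin(t/2)$, then uses several trigonometric manipulations (Pythagorean identity, symmetry, and the vanishing of $\int_0^{\pi/2}\log(2\cos t)\,dt$) to transform the integral into $\tfrac{1}{\pi}\int_{-\pi/3}^{\pi/3}\log|2+2\sin t|\,dt$, and then \emph{cites} Smyth's evaluation of this last integral (from an appendix to a paper of Boyd) as $\tfrac{3\sqrt{3}}{4\pi}L(2,\chi)$. You instead substitute $x=2\cos\theta$, expand $\log|2\cos\theta|$ in its Fourier series, integrate term by term (justified via $L^2$ convergence), and sum the resulting series directly using the multiplicativity of $\chi$. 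Your route is more self-contained---it avoids the external citation and the chain of trig identities---at the cost of needing the Fourier expansion and the $L^2$ justification, both of which you handle cleanly. The paper's route has the advantage of linking the computation to a known Mahler-measure evaluation in the literature.
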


\begin{proof}
The Chebyshev polynomials are monic polynomials with integer coefficients, so Corollary \ref{integercoeff} applies. We note that the complex Julia set of the $T_n(x)$ for $n \ge 2$ is the interval $[-2, 2]$, and $0$ is preperiodic. It is known (see, e.g., \cite{comp}*{Example 2.6}) and can be easily verified that the canonical measure on the complex Julia set of $T_n$ is given by $$d\mu_{T_n} = \frac{1}{2 \pi} \frac{1}{\sqrt{1 - x^2/4}} dx,$$ where $dx$ is the standard Lebesgue measure. Then Corollary \ref{integercoeff} gives
$$\langle x^2, T_n \rangle = 0 - \int_{\vert x \vert \le 1} \log \vert x \vert d\mu_{T_n} =  -\frac{1}{2 \pi} \int_{-1}^{1} \frac{ \log \vert x \vert}{\sqrt{1 - x^2/4}} dx.$$
We claim that $$ -\frac{1}{2 \pi} \int_{-1}^{1} \frac{ \log \vert x \vert}{\sqrt{1 - x^2/4}} dx = \frac{1}{\pi} \int_{-\pi/3}^{\pi/3} \log \vert 2 + 2 \sin t \vert dt = \frac{3 \sqrt{3}}{4 \pi} L(2, \chi).$$
The later equality is shown by Smyth in an appendix to \cite{Boyd}. 
The substitution $x = 2 \sin(t/2)$ gives $$-\frac{1}{2 \pi} \int_{-1}^{1} \frac{\log \vert x \vert}{\sqrt{1 - x^2/4}} dx = -\frac{1}{2\pi} \int_{-\pi/3}^{\pi/3} \log \vert 2 \sin(t/2) \vert dt = -\frac{1}{2\pi} \int_{0}^{\pi/3} \log( 4 \sin^2(t/2)) dt.$$
Using the Pythagorean identity, this is equal to
\begin{equation*} \begin{split} -\frac{1}{2\pi} \int_{0}^{\pi/3} \log( 4 \sin^2(t/2)) & =-\frac{1}{2 \pi} \int_{0}^{\pi/3} (\log (2 - 2 \cos(t/2))+  \log(2 + 2 \cos(t/2))) dt\\
 &= -\frac{1}{\pi} \int_{0}^{\pi/6} (\log(2 - 2 \cos(t)) + \log(2 + 2 \cos(t)))dt \\
& = -\frac{1}{\pi} \int_{\pi/3}^{\pi/2} (\log(2 - 2 \sin(t)) + \log(2 + 2 \sin(t))) dt. \end{split} \end{equation*}
Substituting $t \to -t$ on the interval from $-\pi/3$ to $0$, we see that $$\frac{1}{\pi} \int_{- \pi/3}^{\pi/3} \log \vert 2 + 2 \sin t \vert dt = \int_{0}^{\pi/3} (\log(2 + 2 \sin(t)) + \log(2 - 2 \sin(t)))dt.$$
Then the result follows as $$\int_{0}^{\pi/2}  (\log(2 + 2 \sin(t)) + \log(2 - 2 \sin(t)))dt = \int_{0}^{\pi/2} \log(2 \cos(t)) dt = 0,$$
where the last equality is well-known.
\end{proof}

We are also able to extend the some of the explicit computations done in \cite{PST}. For example, Petsche-Szpiro-Tucker prove Proposition~\ref{quadPST}.
\begin{prop}\cite{PST}*{Proposition 19}\label{quadPST}
Let $\phi(x) = x^2 + c$ for $c \in K$. Then 
$$(1/2) h(c) - \log 3 \le \langle x^2, \phi \rangle \le (1/2) h(c) + \log 2.$$
\end{prop}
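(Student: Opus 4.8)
The plan is to specialize Theorem~\ref{main} to $\phi(x) = x^2 + c$ and reduce both inequalities to elementary estimates for a monic quadratic over each completion of $K$. As noted just before Equation~\ref{noBerk}, every member of the family $x^2+c$ either has good reduction at a non-archimedean place or satisfies the hypothesis of Lemma~\ref{rat} there, so Equation~\ref{noBerk} applies:
$$\langle x^2, \phi\rangle = h_\phi(0) - \sum_{\nu \mid \infty} r_\nu \int_{|\alpha|_\nu < 1} \log|\alpha|_\nu \, d\mu_{\phi,\nu}.$$
Thus it suffices to understand $h_\phi(0)$ and the finitely many archimedean integrals.

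First I would unwind both quantities using the decomposition $h_\phi(z) = \sum_\nu r_\nu \lambda_{\phi,\nu}(z)$ into local canonical heights, which for the monic polynomial $\phi$ are $\lambda_{\phi,\nu}(z) = \lim_n 2^{-n}\log^+|\phi^n(z)|_\nu$. At a non-archimedean $\nu$ with $\nu(c)\geq 0$ one has $\lambda_{\phi,\nu}(0)=0$, while if $\nu(c)<0$ the ultrametric inequality forces $|\phi^n(0)|_\nu = |c|_\nu^{2^{n-1}}$ for $n\geq 1$, hence $\lambda_{\phi,\nu}(0) = \tfrac12\log^+|c|_\nu$; summing, $\sum_{\nu\nmid\infty} r_\nu\lambda_{\phi,\nu}(0) = \tfrac12\bigl(h(c) - \sum_{\nu\mid\infty} r_\nu\log^+|c|_\nu\bigr)$. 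At an archimedean $\nu$ the local canonical height is the dynamical Green's function $g_{\phi,\nu}$, and since $\phi$ is monic the Julia set has logarithmic capacity $1$ (Brolin), so $g_{\phi,\nu}(0) = \int_{\mathbb{C}}\log|\alpha|_\nu\, d\mu_{\phi,\nu}$; splitting this integral at $|\alpha|_\nu = 1$ yields $-\int_{|\alpha|_\nu<1}\log|\alpha|_\nu\, d\mu_{\phi,\nu} = A_\nu - g_{\phi,\nu}(0)$, where $A_\nu := \int_{\mathbb{C}}\log^+|\alpha|_\nu\, d\mu_{\phi,\nu} \geq 0$. Substituting back, the archimedean $g_{\phi,\nu}(0)$ cancel and I obtain the clean identity
$$\langle x^2,\phi\rangle = \tfrac12\sum_{\nu\nmid\infty} r_\nu\log^+|c|_\nu + \sum_{\nu\mid\infty} r_\nu A_\nu.$$

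Both inequalities then follow from bounding each archimedean $A_\nu$ between $\tfrac12\log^+|c|_\nu - \log 3$ and $\tfrac12\log^+|c|_\nu + \log 2$, because $\sum_{\nu\mid\infty} r_\nu = 1$ makes $\sum_{\nu\mid\infty} r_\nu\log^+|c|_\nu$ combine with the non-archimedean part to give exactly $h(c)$. For the upper bound, the filled Julia set of $x^2+c$ over $\mathbb{C}$ lies in the disk of radius $R_\nu = \tfrac12(1+\sqrt{1+4|c|_\nu})$ (the standard escape-radius estimate: $|z|>R_\nu$ forces $|\phi(z)|>|z|$ and escape), so $A_\nu \leq \log R_\nu \leq \tfrac12\log^+|c|_\nu + \log 2$. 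For the lower bound, $A_\nu \geq g_{\phi,\nu}(0) = \tfrac12 g_{\phi,\nu}(c)$, and for $|c|_\nu\geq 2$ the telescoping expansion $g_{\phi,\nu}(c) = \log|c|_\nu + \sum_{k\geq 0} 2^{-(k+1)}\log|1 + c/\phi^k(c)^2|_\nu$, together with $|\phi^k(c)|_\nu\geq |c|_\nu$ (the orbit of $c$ is non-decreasing in absolute value once $|c|_\nu\geq 2$), gives $g_{\phi,\nu}(c)\geq \log|c|_\nu - \log 2$; when $|c|_\nu<2$ one simply uses $A_\nu\geq 0 > \tfrac12\log^+|c|_\nu - \log 3$. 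Alternatively, the lower bound drops straight out of Corollary~\ref{disjointcor}: $\langle x^2,\phi\rangle\geq h_\phi(0) = \sum_\nu r_\nu\lambda_{\phi,\nu}(0)$, and the same per-place Green's function estimate bounds $\lambda_{\phi,\nu}(0)$ below by $\tfrac12\log^+|c|_\nu$, minus $\log 3$ at the archimedean places.

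The only ingredient beyond elementary manipulation is the capacity-one normalization at the archimedean places, which is what converts the Berkovich integral in Theorem~\ref{main} into the Green's function value and produces the exact cancellation; the bounds on $A_\nu$ are just the escape-radius estimate and a geometric-series tail. I expect the mildly delicate point to be the bookkeeping over archimedean places — ensuring the weights $r_\nu$ assemble $\sum_{\nu\mid\infty} r_\nu\log^+|c|_\nu$ and $\tfrac12\sum_{\nu\nmid\infty} r_\nu\log^+|c|_\nu$ into exactly $\tfrac12 h(c)$ rather than something larger — but once the uniform per-place bounds $\tfrac12\log^+|c|_\nu - \log 3 \leq A_\nu \leq \tfrac12\log^+|c|_\nu + \log 2$ are in hand this is routine.
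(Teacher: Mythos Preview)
The paper does not prove this proposition; it is quoted verbatim from \cite{PST}*{Proposition 19} and used as input for the subsequent height-difference bounds. So there is no ``paper's own proof'' to compare against.

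Your argument is correct and in fact gives a pleasant alternative derivation entirely from the machinery of this paper. By routing through Equation~\eqref{noBerk} and the capacity-one identity $g_{\phi,\nu}(0)=\int\log|\alpha|_\nu\,d\mu_{\phi,\nu}$, you collapse the formula to
\[
\langle x^2,\phi\rangle=\tfrac12\sum_{\nu\nmid\infty}r_\nu\log^+|c|_\nu+\sum_{\nu\mid\infty}r_\nu\int\log^+|\alpha|_\nu\,d\mu_{\phi,\nu},
\]
which is exactly the specialization of \cite{PST}*{Proposition~16} to a monic polynomial (the paper remarks that Theorem~\ref{main} is the companion to that result with $0$ and $\infty$ swapped, and you have effectively reproved the swap for $x^2+c$). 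The original \cite{PST} argument presumably starts from their Proposition~16 directly; your route starts from Theorem~\ref{main} and recovers the same local quantity $A_\nu$ after the Green's-function cancellation, which is a nice internal consistency check. Your per-place bounds on $A_\nu$ via the escape radius $R_\nu=\tfrac12(1+\sqrt{1+4|c|_\nu})$ for the upper bound, and via $A_\nu\ge g_{\phi,\nu}(0)=\tfrac12 g_{\phi,\nu}(c)$ with the telescoping estimate for the lower bound, all check out; the case split at $|c|_\nu=2$ for the lower bound is handled correctly, and the weights $r_\nu$ assemble as you say.
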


We show the following.
\begin{prop}\label{quad}
Let $\phi(x) = x^2 + c$ for $c \in K$. Then $$\langle x^2, \phi \rangle \ge h_{\phi}(0).$$ 
Suppose that for all archimedean place $\nu$ of $K$ we have that $\vert c \vert_{\nu} \ge 2 + \sqrt{2}$. 
Then 
$$\langle x^2, \phi \rangle = h_{\phi}(0).$$
\end{prop}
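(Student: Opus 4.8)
The plan is to deduce Proposition~\ref{quad} from Corollary~\ref{disjointcor} together with Lemma~\ref{rat}, reducing everything to a statement about Julia sets. The inequality $\langle x^2, \phi \rangle \ge h_\phi(0)$ is immediate from Corollary~\ref{disjointcor}, so the real content is the equality under the hypothesis that $|c|_\nu \ge 2 + \sqrt{2}$ at every archimedean place. By Corollary~\ref{disjointcor}, equality holds precisely when the Julia set of $\phi$ is disjoint from the Berkovich open unit disk at \emph{every} place of $K$, so I would split into the non-archimedean and archimedean cases.

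For the non-archimedean places I would invoke Lemma~\ref{rat}. Write $\phi(x) = x^2 + c$, so in the notation of the lemma $n = 2$, $a_1 = 0$, $a_0 = c$. If $\nu(c) \ge 0$ then $\phi$ has good reduction at $\nu$ and the Julia set is the Gauss point, which lies outside the open unit disk. If $\nu(c) < 0$, then since $a_1 = 0$ one trivially has $\nu(a_0) = \nu(c) < 0 \le \nu(a_1) = \infty$ and $\nu(a_0) < 0$, so the hypothesis of Lemma~\ref{rat} is satisfied and the Julia set again misses the open unit disk. Hence the non-archimedean contributions to the sum in Theorem~\ref{main} all vanish unconditionally for the family $x^2 + c$ with $c \in K$ --- this is exactly the remark in the paper that the criterion of Lemma~\ref{rat} applies to the whole quadratic family.

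It remains to handle the archimedean places, and this is where the hypothesis $|c|_\nu \ge 2 + \sqrt{2}$ enters; I expect this to be the main obstacle. Fix an archimedean place $\nu$ and regard $\phi(z) = z^2 + c$ as a complex polynomial with $|c| \ge 2 + \sqrt{2}$. The goal is to show the filled Julia set (equivalently the Julia set, since for such large $|c|$ the Julia set is a Cantor set equal to its own boundary and the filled Julia set has empty interior) is disjoint from the closed disk of radius $1$. I would show directly that the open disk $D = \{|z| < 1\}$ lies in the basin of attraction of $\infty$, mimicking the escape argument in the proof of Lemma~\ref{rat}. For $|z| < 1$ we get $|\phi(z)| = |z^2 + c| \ge |c| - |z|^2 > |c| - 1 \ge 1 + \sqrt{2}$. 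Then one checks that the region $\{|w| \ge 1 + \sqrt{2}\}$ is forward-invariant and escapes: if $|w| \ge R$ with $R = 1 + \sqrt{2}$, then $|\phi(w)| \ge |w|^2 - |c|$... this last step is the delicate one, because $|c|$ can be much larger than $R^2$, so a crude bound $|w|^2 - |c|$ is useless. The correct approach is instead to note that after \emph{one} application of $\phi$ we land in $\{|w| \ge |c| - 1\}$, and then use the standard escape criterion for $z^2 + c$: if $|w| \ge |c|$ and $|w| \ge 2$ then $|\phi(w)| \ge |w|^2 - |c| \ge |w|^2 - |w| = |w|(|w|-1) \ge |w|$, with strict growth. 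So I would argue: $z \in D \Rightarrow |\phi(z)| > |c| - 1 \ge 1 + \sqrt{2} > 2$, hence $|\phi(z)| \ge |c| - 1$; I then need $|c| - 1 \ge |c|$, which is false --- so I must be more careful and instead observe that $|\phi^2(z)| \ge |\phi(z)|^2 - |c| \ge (|c|-1)^2 - |c| = |c|^2 - 3|c| + 1$, which exceeds $|c|$ as soon as $|c|^2 - 4|c| + 1 > 0$, i.e. $|c| > 2 + \sqrt{3}$; tightening this to get the stated bound $2 + \sqrt{2}$ requires instead tracking $|\phi(z)|$ exactly (it equals $|a_0|$-type bound: $|\phi(z)| \ge |c| - 1$ but also we may use $|z| < 1 \Rightarrow |z|^2 < 1$, giving $|\phi(z)| > |c|-1$), and then noting that once $|w| > |c| - 1 \ge 1 + \sqrt 2$ we have $|w|^2 > |c|$ precisely when $(|c|-1)^2 \ge |c|$, i.e. $|c|^2 - 3|c| + 1 \ge 0$, automatically true for $|c| \ge 2 + \sqrt 2 > (3+\sqrt5)/2$; so $|\phi(w)| \ge |w|^2 - |c| > 0$ and in fact $|\phi^2(z)| > |\phi(z)|$, and iterating gives escape to $\infty$. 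Thus $D$ is in the attracting basin of $\infty$, so $J(\phi) \cap D = \emptyset$ at every archimedean $\nu$, and combined with the non-archimedean vanishing above, Corollary~\ref{disjointcor} gives $\langle x^2, \phi \rangle = h_\phi(0)$.

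I should double-check the precise constant $2 + \sqrt{2}$ against the chain of inequalities $|z| < 1 \Rightarrow |\phi(z)| > |c| - 1$ and then $|c| - 1 \ge ?$ for the escape criterion to bite; the cleanest sufficient condition turns out to be that $(|c| - 1)^2 > |c|$ (which forces strict growth of modulus beyond the first step) together with $|c| - 1 > 1$ (to stay out of $\overline D$), and $(|c|-1)^2 > |c|$ holds iff $|c| > (3 + \sqrt 5)/2 \approx 2.618$, so $2 + \sqrt 2 \approx 3.414$ comfortably suffices --- the paper presumably chooses $2 + \sqrt 2$ to make the one-step bound $|\phi(z)| > |c| - 1 \ge 1 + \sqrt 2$ satisfy $(1 + \sqrt 2)^2 = 3 + 2\sqrt 2 \ge 2 + \sqrt 2 + 1 = |c|_{\min}$... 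I would verify this arithmetic carefully in the write-up, as getting the optimal-looking constant is the only subtle point; everything else is a routine invariant-region escape argument plus citing Corollary~\ref{disjointcor} and Lemma~\ref{rat}.
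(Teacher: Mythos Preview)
Your overall structure matches the paper exactly: invoke Corollary~\ref{disjointcor} for the inequality, use Lemma~\ref{rat} (or good reduction) to dispose of all non-archimedean places for the family $x^2+c$, and then prove an archimedean escape lemma showing that $|c|_\nu \ge 2+\sqrt{2}$ forces the open unit disk into the basin of $\infty$.

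The archimedean escape argument, however, is not correctly carried out. Your final criterion $(|c|-1)^2 > |c|$ only yields $|\phi(w)| \ge |w|^2 - |c| > 0$, which says nothing about escape; and your two-step bound gives $2+\sqrt{3}$, not $2+\sqrt{2}$. The clean way, which is what the paper does, is to use the standard escape radius for $z^2+c$: if
\[
|w| > R := \frac{1+\sqrt{1+4|c|}}{2},
\]
the larger root of $r^2 - r - |c| = 0$, then $|\phi(w)| \ge |w|^2 - |c| > |w|$, so $|\phi^n(w)|$ grows geometrically. Thus it suffices to check that $|z|<1$ implies $|\phi(z)| > |c|-1 \ge R$. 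The inequality $|c|-1 \ge R$ rearranges to $2|c|-3 \ge \sqrt{1+4|c|}$, and squaring gives $4|c|^2 - 16|c| + 8 \ge 0$, i.e.\ $|c|^2 - 4|c| + 2 \ge 0$, whose larger root is exactly $2+\sqrt{2}$. This is where the constant comes from; your various attempts circle around it but never land on the condition $|w|^2 - |c| > |w|$ that actually characterizes escape.
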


\begin{proof}
The first statement follows immediately from Corollary \ref{disjointcor}. Note that polynomials of the form $x^2 + c$ satisfy the hypothesis of Lemma~\ref{rat} for $\nu$ non-archimedean. In general, we show that if $\vert c \vert_{\nu} \ge 2 + \sqrt{2}$, then the Julia set of $\phi$ at an archimedean place $\nu$ is disjoint from the open unit disk. Indeed, if $\vert c \vert_{\nu} \ge 2 + \sqrt{2}$, then every point in the open unit disk lies in the basin of attraction of infinity at $\nu$. If $\vert x \vert_{\nu} > \frac{1 + \sqrt{1 + 4 \vert c \vert_{\nu}}}{2}$, then $\vert \phi(x) \vert_{\nu} > \vert x \vert_{\nu}$ and hence $\vert \phi^n(x) \vert_{\nu}$ grows geometrically. If $\vert c \vert_{\nu} \ge 2 + \sqrt{2}$ and $\vert a \vert_{\nu} < 1$, then 
$$\vert a^2 + c \vert_{\nu} > \vert c \vert_{\nu} - 1 \ge \frac{1 +  \sqrt{1 + 4 \vert c \vert_{\nu}}}{2}.$$
Thus if $\vert c \vert_{\nu} \ge 2 + \sqrt{2}$ for all archimedean places $\nu$, then the Julia set of $\phi$ at every place of $K$ is disjoint from the open unit disk, so the proposition follows from Corollary \ref{disjointcor}.
\end{proof}

We can combine Propositions \ref{quadPST} and \ref{quad}  to derive a bound on the difference between $h_{\phi}(0)$ and $(1/2)h(c)$. Since $$h_{\phi}(0) = \lim_{n \to \infty} \frac{h(\phi^n(0))}{2^n},$$ one may expect that $(1/2) h(\phi(0))$ is a reasonable approximation for $h_{\phi}(0)$. We show that this is a good approximation uniformly in $\phi$.

\begin{prop}
Let $\phi(x) = x^2 + c$. Then we have that $h_{\phi}(0) \le (1/2) h(\phi(0)) + \log 2$. 
\end{prop}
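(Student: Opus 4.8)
The plan is to obtain the bound by sandwiching the canonical height $h_\phi(0)$ between two quantities already controlled in the excerpt: the Arakelov--Zhang pairing $\langle x^2,\phi\rangle$ from below, and the Weil height $h(\phi(0))$ from above, with the pairing serving as the bridge. First I would record the trivial but crucial observation that for $\phi(x)=x^2+c$ one has $\phi(0)=c$, so the target inequality $h_\phi(0)\le (1/2)h(\phi(0))+\log 2$ is literally the inequality $h_\phi(0)\le (1/2)h(c)+\log 2$.

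Next I would invoke the first assertion of Proposition~\ref{quad} (equivalently Corollary~\ref{disjointcor}), which gives
$$
h_\phi(0)\le \langle x^2,\phi\rangle
$$
with no hypothesis on $c$; this is the lower bound on the pairing coming from the fact that the integral $\int_{|\alpha|_\nu<1}\log|\alpha|_\nu\,d\mu_{\phi,\nu}$ is nonpositive at every place. Then I would apply the upper bound half of Proposition~\ref{quadPST} of Petsche--Szpiro--Tucker, namely
$$
\langle x^2,\phi\rangle\le (1/2)h(c)+\log 2 .
$$

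Chaining these two inequalities yields $h_\phi(0)\le (1/2)h(c)+\log 2=(1/2)h(\phi(0))+\log 2$, which is the claim. There is essentially no obstacle here: the content is entirely in Proposition~\ref{quadPST} and Corollary~\ref{disjointcor}, and the proof is the two-line composition of their conclusions once one notices $\phi(0)=c$. (One could alternatively attempt a direct telescoping estimate of $h(\phi^n(0))/2^n$ using the Call--Silverman comparison $|h-h_\phi|<C_\phi$, but the sandwich through the pairing is cleaner and avoids tracking the constant $C_\phi$.)
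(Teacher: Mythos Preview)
Your proposal is correct and matches the paper's proof essentially line for line: the paper invokes Proposition~\ref{quad} to get $\langle x^2,\phi\rangle\ge h_\phi(0)$ and then the upper bound from Proposition~\ref{quadPST} to conclude. Your observation that $\phi(0)=c$ is exactly the implicit identification the paper uses.
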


\begin{proof}
By Proposition \ref{quad}, $$\langle x^2, \phi \rangle \ge h_{\phi}(0).$$ By Proposition \ref{quadPST}, this implies that $h_{\phi}(0) \le (1/2)h(\phi(0)) + \log 2$. 
\end{proof}

Petsche-Szpiro-Tucker also consider the pairing between the squaring map $x^2$ and the map $\alpha - (\alpha - x)^2$, which is a conjugate of the squaring map by translation by $\alpha$. Let $\sigma(x)=x^2$, so that, for a M\"{o}bius transformation $f$, we have $\sigma^{f}(x) = f^{-1}(f(x)^2)$. For $a > 0, b \ge 0$, let $I(a, b) = - \int_{0}^{1} \log \min \{a, \vert b + e^{2 \pi i \theta} \vert\} d \theta.$  

\begin{prop}{\cite{PST}*{Proposition 18}}
 Suppose $f(x) = \alpha - x$ is defined over a number field $K$. Then 
$$\langle x^2, \sigma^{f} \rangle = h(\alpha) + \sum_{\nu \mid \infty} r_{\nu} I(1, \vert \alpha \vert_{\nu}).$$
\end{prop}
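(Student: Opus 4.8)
The plan is to apply Theorem~\ref{main} directly to $\phi:=\sigma^f$, which in affine coordinates is the degree-$2$ polynomial $\phi(x)=\alpha-(\alpha-x)^2\in K[x]$; note that $f(x)=\alpha-x$ is an involution, so $f^{-1}(w)=\alpha-w$. Renaming the dummy variable in Theorem~\ref{main} to $z$ to avoid clashing with the parameter $\alpha$, we have
$$\langle x^2,\phi\rangle=h_\phi(0)-\sum_{\nu\in M_K}r_\nu\int_{|z|_\nu<1}\log|z|_\nu\,d\mu_{\phi,\nu}(z),$$
so it remains to compute $h_\phi(0)$ and each local integral.

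For the height term: since $\phi=f^{-1}\circ\sigma\circ f$, the canonical heights of conjugate maps satisfy $h_\phi=h_\sigma\circ f$ (because $\phi^n=f^{-1}\circ\sigma^n\circ f$ and $|h(f^{-1}(u))-h(u)|$ is bounded), and $h_\sigma=h_{x^2}=h$ (as $h(w^2)=2h(w)$ for the Weil height on $\BP^1$). Hence $h_\phi(z)=h(\alpha-z)$ and $h_\phi(0)=h(\alpha)$.

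For the measures: by uniqueness of the canonical invariant probability measure --- or, concretely, by applying the equidistribution of preimages recalled in Section~\ref{Background} to $\phi^n=f^{-1}\circ\sigma^n\circ f$ --- the measure $\mu_{\phi,\nu}$ is the image of $\mu_{x^2,\nu}$ under $w\mapsto\alpha-w$. At a non-archimedean place, $\mu_{x^2,\nu}$ is the point mass at the Gauss point $\zeta_{0,1}$ (the squaring map has good reduction everywhere), and its image is the point mass at the point of the Berkovich line corresponding to the closed disk $\{z:|z-\alpha|_\nu\le 1\}$; the coordinate seminorm there equals $\max\{|\alpha|_\nu,1\}\ge 1$, so the support of $\mu_{\phi,\nu}$ is disjoint from the Berkovich open unit disk and the integral vanishes. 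At an archimedean place, $\mu_{x^2,\nu}$ is normalized arclength $dt$ on the unit circle $\{w=e^{2\pi i t}\}$, so the change of variables gives
$$\int_{|z|_\nu<1}\log|z|_\nu\,d\mu_{\phi,\nu}(z)=\int_0^1\mathbf{1}\bigl[\,|\alpha-e^{2\pi i t}|_\nu<1\,\bigr]\log|\alpha-e^{2\pi i t}|_\nu\,dt.$$
By rotation-invariance of $dt$ one may replace $\alpha$ by $|\alpha|_\nu$ in the integrand, and then the substitution $t\mapsto t+\tfrac12$ replaces $|\alpha|_\nu-e^{2\pi i t}$ by $|\alpha|_\nu+e^{2\pi i t}$; since $\mathbf{1}[x<1]\log x=\log\min\{1,x\}$ for $x>0$ (the integrand having at worst a logarithmic singularity), the integral equals $\int_0^1\log\min\{1,\bigl|\,|\alpha|_\nu+e^{2\pi i t}\,\bigr|\}\,dt=-I(1,|\alpha|_\nu)$.

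Assembling the pieces, only the archimedean places contribute, and $\langle x^2,\phi\rangle=h(\alpha)+\sum_{\nu\mid\infty}r_\nu I(1,|\alpha|_\nu)$, as claimed. I expect the only mildly delicate points to be the bookkeeping that identifies $\mu_{\phi,\nu}$ with a translate of the canonical measure of $x^2$, and the change of variables matching the archimedean integral with the definition of $I(1,|\alpha|_\nu)$; the non-archimedean vanishing is immediate once one observes that an affine translation carries the Gauss point to a point of coordinate seminorm $\ge 1$.
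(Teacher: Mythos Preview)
Your proof is correct and follows essentially the same route as the paper's proof of Proposition~\ref{conjugation} (which contains this statement as the special case $a=-1$, $b=\alpha$): apply Theorem~\ref{main}, identify $\mu_{\sigma^f,\nu}$ as the pushforward of $\mu_{x^2,\nu}$ under $f^{-1}$, compute $h_{\sigma^f}(0)=h(f(0))=h(\alpha)$, and reduce the archimedean integral to $I(1,|\alpha|_\nu)$ by a change of variable on the unit circle. Your treatment is in fact slightly more explicit than the paper's in that you spell out why the non-archimedean contributions vanish (the translate of the Gauss point has coordinate seminorm $\max\{|\alpha|_\nu,1\}\ge 1$), whereas the paper's proof of Proposition~\ref{conjugation} passes over this point in silence.
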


We simplify the proof of this this and extend it to the case of any linear $f$, which allows us to bound the difference between the standard height and canonical height in these cases.
\begin{prop}\label{conjugation}
Let $f(x)=ax+b\in K[x]$ with $a\neq 0$. Then 
$$\langle x^2, \sigma^{f} \rangle = h(b) + \sum_{\nu \mid \infty} r_{\nu} \log  \vert a \vert_{\nu} + \sum_{\nu \mid \infty} r_{\nu} I(\vert a \vert_{\nu}, \vert b \vert_{\nu}).$$
\end{prop}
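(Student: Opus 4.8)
The plan is to apply Theorem~\ref{main} with $\phi=\sigma^f$, where $\sigma(x)=x^2$ and $f(x)=ax+b$, so that $\sigma^f=f^{-1}\circ\sigma\circ f$. One needs two things: the value $h_{\sigma^f}(0)$, and the local integrals $\int_{|\alpha|_\nu<1}\log|\alpha|_\nu\,d\mu_{\sigma^f,\nu}$ at each place $\nu$ of $K$.

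For the canonical height I would use its functoriality under M\"obius conjugation: for any M\"obius transformation $g$, the function $x\mapsto h_{x^2}(g(x))$ scales by $2$ under $g^{-1}\circ\sigma\circ g$ and differs from $h$ by $O(1)$ (since $h\circ g=h+O(1)$), so by the uniqueness of the canonical height it equals $h_{g^{-1}\sigma g}$. With $g=f$ this gives $h_{\sigma^f}(x)=h_{x^2}(f(x))=h(ax+b)$, hence $h_{\sigma^f}(0)=h(b)$. (Alternatively, $(\sigma^f)^n(0)=f^{-1}(b^{2^n})=(b^{2^n}-b)/a$, and $h\bigl((b^{2^n}-b)/a\bigr)/2^n\to h(b)$ by the product formula and elementary height inequalities.)

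For the measures, the key point is that the canonical measure is equivariant under conjugacy: $\mu_{\sigma^f,\nu}=(f^{-1})_*\mu_{x^2,\nu}$ at every place. At a non-archimedean $\nu$, $x^2$ has good reduction, so $\mu_{x^2,\nu}=\delta_{\zeta_{0,1}}$; the affine map $f^{-1}(x)=(x-b)/a$ carries the Gauss point to the type-II point attached to the disc of radius $1/|a|_\nu$ about $-b/a$, at which the coordinate function has value $\max\{|b|_\nu,1\}/|a|_\nu$, so the integral equals $\log\min\{\max\{|b|_\nu,1\}/|a|_\nu,\,1\}$; this is $0$ whenever $|a|_\nu\le\max\{|b|_\nu,1\}$, in particular whenever $a\in\mathcal O_K$. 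At an archimedean $\nu$, $\mu_{x^2,\nu}$ is normalized Haar measure on the unit circle, so $|\alpha|_\nu=|f^{-1}(z)|_\nu=|z-b|_\nu/|a|_\nu$ for $z$ uniform on the circle, and by rotational invariance of Haar measure $|z-b|_\nu$ has the same distribution as $r(\theta):=\bigl|\,|b|_\nu+e^{2\pi i\theta}\,\bigr|$ with $\theta$ uniform in $[0,1)$. Using $\min\{r/c,1\}=\min\{r,c\}/c$ for $c>0$ and the definition of $I$, this yields
\[
-\int_{|\alpha|_\nu<1}\log|\alpha|_\nu\,d\mu_{\sigma^f,\nu}
=-\int_0^1\log\min\Bigl\{\tfrac{r(\theta)}{|a|_\nu},\,1\Bigr\}\,d\theta
=\log|a|_\nu+I(|a|_\nu,|b|_\nu).
\]
Substituting into Theorem~\ref{main} gives
\[
\langle x^2,\sigma^f\rangle=h(b)+\sum_{\nu\mid\infty}r_\nu\log|a|_\nu+\sum_{\nu\mid\infty}r_\nu\,I(|a|_\nu,|b|_\nu),
\]
and the case $f(x)=\alpha-x$ (so $a=-1$, $b=\alpha$) recovers \cite{PST}*{Proposition 18} as a check.

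The functoriality of the canonical height and the place-by-place bookkeeping with the $r_\nu$ are routine. The main technical step is the archimedean integral: one must convert the Berkovich integral into a circle integral, reduce to the moduli $|a|_\nu$, $|b|_\nu$ by rotational symmetry, and match the result with $-\log|a|_\nu-I(|a|_\nu,|b|_\nu)$. One should also be careful with the non-archimedean places, so as to see that they contribute nothing to the final formula; as the computation above shows, this is exactly the point where integrality of the coefficient $a$ at the finite places enters.
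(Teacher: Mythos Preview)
Your argument is essentially identical to the paper's: invoke Theorem~\ref{main}, use the conjugacy relation $h_{\sigma^f}=h\circ f$ to obtain $h_{\sigma^f}(0)=h(b)$, push forward Haar measure on the unit circle under $f^{-1}$ to the circle of radius $|a|_\nu^{-1}$ about $-b/a$, and reduce the archimedean integral to $\log|a|_\nu+I(|a|_\nu,|b|_\nu)$.

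You are in fact more careful than the paper at the finite places. The paper's proof computes only the archimedean integrals and then simply says ``Theorem~\ref{main} implies the result,'' never addressing the non-archimedean terms; your computation of the finite-place integral as $\log\min\{\max\{|b|_\nu,1\}/|a|_\nu,\,1\}$ is correct, as is your observation that it need not vanish. Indeed, without a hypothesis such as $|a|_\nu\le\max\{1,|b|_\nu\}$ at every finite $\nu$ (satisfied, e.g., when $a\in\CO_K$, and in particular in the PST case $a=-1$), the stated formula can fail: for $K=\BQ$ and $f(x)=x/2$ one has $\sigma^f(x)=x^2/2$, the $2$-adic Julia set is the point $\zeta_{0,1/2}$ inside the open unit disk, and Theorem~\ref{main} gives $\langle x^2,\sigma^f\rangle=\log 2$, whereas the right-hand side of the displayed formula is $0$. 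So your caveat in the final paragraph is exactly right and sharpens the paper's argument.
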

In \cite{PST}*{Lemma 17}, Petsche-Szpiro-Tucker analyze $I(1, b)$ and show that it attains a global maximum at $1$. They compute that $$I(1,1) = \frac{3 \sqrt{3}}{4 \pi} L(2, \chi),$$ where as in Proposition \ref{cheb}, $L(s, \chi)$ is the Dirichlet $L$-function associated to the nontrivial character with modulus $3$. 
Using similar techniques and some casework, one can show that for fixed $a$, $I(a, b)$ attains a global maximum at $b = 1$, and that $I(a, 1) = 0$ for $a \ge 2$. For fixed $b$, $I(a, b)$ is a non-increasing function.
Note that $\sum_{\nu \mid \infty} r_{\nu} = 1$. Thus if $\vert a \vert_{\nu} \ge 1$ for all archimedean places $\nu$, $$\langle x^2, \sigma^{f} \rangle \le h(b) + \sum_{\nu \mid \infty} r_{\nu} \log  \vert a \vert_{\nu} + \frac{3 \sqrt{3}}{4 \pi} L(2, \chi).$$

\begin{proof}[Proof of Proposition~\ref{conjugation}]
Recall that the canonical measure of $\sigma(x) = x^2$ at an archimedean place $\nu$ (viewed as an embedding of $K$ into $\BC$) is the uniform measure on the unit circle. Since the canonical measure of $\sigma^{f}$ is the pushforward of the canonical measure of $\sigma$ under $f$, the canonical measure of $\sigma^{f}$ is the uniform measure on the circle of radius $a^{-1}$ centered at $f^{-1}(0)=-b/a$. Therefore
\begin{equation*} \begin{split}-\int_{\vert \alpha \vert_{\nu} < 1} \log \vert \alpha \vert_{\nu} d \mu_{\sigma^f, \nu} &= -\int_0^1 \log \min\{1, \vert \vert -b/a \vert_{\nu} + \vert a^{-1}\vert_{\nu} e^{2 \pi i \theta} \vert\} d \theta \\
&= -\int_{0}^{1} \log \left( \vert a^{-1} \vert_{\nu} \cdot \min\{\vert a \vert_{\nu}, \vert \vert b \vert_{\nu} +  e^{2 \pi i \theta} \vert\} \right) d \theta\\
&=  \log \vert a \vert_{\nu} + I(\vert a \vert_{\nu}, \vert b \vert_{\nu}),
\end{split}\end{equation*}
since $e^{2 \pi i \theta} + t$ parameterizes the unit circle centered at $t$ at constant speed at $\theta$ goes from $0$ to $1$. 
Note that $h_{\sigma^{f}}(x) = h(f(x))$, so $h_{\sigma^{f}}(0) = h(f(0))=h(b)$. Therefore Theorem~\ref{main} implies the result. 
\end{proof}

We now apply these explicit computations and Theorem~\ref{diffbound} to bound the height difference between the canonical height and the standard height. 

\begin{prop}
Let $T_n(x)$ denote the $n$th Chebyshev polynomial. Let $f(x) = ax + b$ be a M\"{o}bius transform defined over a number field $K$ with $\vert a \vert_{\nu} \ge 1$ for all archimedean places $\nu$. Let $\sigma^{f}(x) = f^{-1}(f(x)^2)$. Let $c = \frac{3 \sqrt{3}}{4 \pi} L(2, \chi) + \log 2$. Then for any $n\geq 2$ and any $x \in \BP^1(\overline{K})$,
$$h_{T_n}(x) - h(x) \le c.$$
$$h_{\sigma^{f}}(x) - h(x) \le c + h(b) + \sum_{\nu \mid \infty} r_{\nu} \log  \vert a \vert_{\nu}.$$
\end{prop}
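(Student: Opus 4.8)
The plan is to derive both inequalities directly from Theorem~\ref{diffbound}, feeding in the explicit values of the pairing $\langle x^2,\cdot\rangle$ already computed in Propositions~\ref{cheb} and~\ref{conjugation}. The only extra observation needed is that in both families the point $\infty$ is fixed, so the term $h_\phi(\infty)$ appearing in Theorem~\ref{diffbound} vanishes, and the bound reduces to a one-line substitution.

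For the Chebyshev case I would first note that $T_n$ is a monic polynomial, so $\infty$ is a (super)fixed point, hence preperiodic, giving $h_{T_n}(\infty)=0$. Proposition~\ref{cheb} supplies $\langle x^2, T_n\rangle = \tfrac{3\sqrt{3}}{4\pi}L(2,\chi)$, which is independent of $n$, so the resulting bound is automatically uniform in $n$. Then Theorem~\ref{diffbound} gives, for every $x\in\BP^1(\overline K)$,
\[
h_{T_n}(x)-h(x)\le \langle x^2,T_n\rangle + h_{T_n}(\infty)+\log 2 = \frac{3\sqrt3}{4\pi}L(2,\chi)+\log 2 = c.
\]

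For the conjugated squaring map, since $f(x)=ax+b$ is affine it fixes $\infty$, hence so does $\sigma^f=f^{-1}\circ\sigma\circ f$, and $h_{\sigma^f}(\infty)=0$. By Proposition~\ref{conjugation},
\[
\langle x^2,\sigma^f\rangle = h(b)+\sum_{\nu\mid\infty}r_\nu\log|a|_\nu+\sum_{\nu\mid\infty}r_\nu I(|a|_\nu,|b|_\nu).
\]
The step requiring care is bounding the last sum: here I would invoke the properties of $I$ recorded in the discussion after Proposition~\ref{conjugation} and in \cite{PST}*{Lemma 17}, namely that for fixed $b$ the function $a\mapsto I(a,b)$ is non-increasing, and that $I(1,b)$ attains its maximum at $b=1$ with $I(1,1)=\tfrac{3\sqrt3}{4\pi}L(2,\chi)$. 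Since the hypothesis gives $|a|_\nu\ge 1$ at every archimedean place, this yields $I(|a|_\nu,|b|_\nu)\le I(1,|b|_\nu)\le \tfrac{3\sqrt3}{4\pi}L(2,\chi)$, and then $\sum_{\nu\mid\infty}r_\nu I(|a|_\nu,|b|_\nu)\le \tfrac{3\sqrt3}{4\pi}L(2,\chi)$ because $\sum_{\nu\mid\infty}r_\nu=1$. Combining,
\[
\langle x^2,\sigma^f\rangle\le h(b)+\sum_{\nu\mid\infty}r_\nu\log|a|_\nu+\frac{3\sqrt3}{4\pi}L(2,\chi),
\]
and plugging this into Theorem~\ref{diffbound} (again with $h_{\sigma^f}(\infty)=0$) and adding $\log 2$ gives $h_{\sigma^f}(x)-h(x)\le c+h(b)+\sum_{\nu\mid\infty}r_\nu\log|a|_\nu$.

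I expect the main obstacle to be not analytic at all: it is simply (i) confirming the vanishing of $h_\phi(\infty)$ in each case, and (ii) applying the monotonicity and extremal properties of $I$ with the correct inequality directions under the assumption $|a|_\nu\ge 1$. Once those are in hand, the proposition follows immediately from Theorem~\ref{diffbound} and Propositions~\ref{cheb} and~\ref{conjugation}.
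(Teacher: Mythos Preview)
Your proposal is correct and follows exactly the approach the paper intends: the paper states this proposition immediately after remarking ``We now apply these explicit computations and Theorem~\ref{diffbound},'' and gives no separate proof, so your argument simply fills in the details (the vanishing of $h_\phi(\infty)$ for polynomials and the bound $\sum_{\nu\mid\infty}r_\nu I(|a|_\nu,|b|_\nu)\le \tfrac{3\sqrt3}{4\pi}L(2,\chi)$, which the paper has already recorded in the paragraph following Proposition~\ref{conjugation}).
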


\begin{bibdiv}
\begin{biblist}

\bib{Bakerheights}{article}{
    AUTHOR = {Baker, Matthew},
     TITLE = {A finiteness theorem for canonical heights attached to
              rational maps over function fields},
   JOURNAL = {J. Reine Angew. Math.},
  FJOURNAL = {Journal f\"{u}r die Reine und Angewandte Mathematik. [Crelle's
              Journal]},
    VOLUME = {626},
      YEAR = {2009},
     PAGES = {205--233},
      ISSN = {0075-4102},
   MRCLASS = {14G25 (11G50 11R58 14G40)},
  MRNUMBER = {2492995},
MRREVIEWER = {Shu Kawaguchi},
       URL = {https://doi.org/10.1515/CRELLE.2009.008},
}

\bib{Baker}{book}{
	author = {Baker, Matthew},
	author = {Rumely, Robert S.},
	title={Potential theory and dynamics on the Berkovich projective line},
	date={2010},
	publisher={American Mathematical Society},
	volume = {159},
}

\bib {BakerRumely}{article}{
    AUTHOR = {Baker, Matthew H.},
    author= {Rumely, Robert},
     TITLE = {Equidistribution of small points, rational dynamics, and
              potential theory},
   JOURNAL = {Ann. Inst. Fourier (Grenoble)},
  FJOURNAL = {Universit\'{e} de Grenoble. Annales de l'Institut Fourier},
    VOLUME = {56},
      YEAR = {2006},
    NUMBER = {3},
     PAGES = {625--688},
      ISSN = {0373-0956},
}

\bib {Benedetto}{article}{
    AUTHOR = {Benedetto, Robert L.},
     TITLE = {Heights and preperiodic points of polynomials over function
              fields},
   JOURNAL = {Int. Math. Res. Not.},
  FJOURNAL = {International Mathematics Research Notices},
      YEAR = {2005},
    NUMBER = {62},
     PAGES = {3855--3866},
      ISSN = {1073-7928},
}

\bib{Boyd}{article}{
author = {Boyd, David W.},
title = {Speculations concerning the range of Mahler's measure},
journal = {Canad. Math. Bull.},
year = {1981},
volume = {24},
number = {4},
pages = {453--469}
}

\bib {BridyTucker}{article}{
    AUTHOR = {Bridy, Andrew}
    Author = {Tucker, Thomas J.},
     TITLE = {{$ABC$} implies a {Z}sigmondy principle for ramification},
   JOURNAL = {J. Number Theory},
  FJOURNAL = {Journal of Number Theory},
    VOLUME = {182},
      YEAR = {2018},
     PAGES = {296--310},
      ISSN = {0022-314X},
}

\bib {Brolin}{article}{
    AUTHOR = {Brolin, Hans},
     TITLE = {Invariant sets under iteration of rational functions},
   JOURNAL = {Ark. Mat.},
  FJOURNAL = {Arkiv f\"{o}r Matematik},
    VOLUME = {6},
      YEAR = {1965},
     PAGES = {103--144 (1965)},
}

\bib{Buff}{article}{
    AUTHOR = {Buff, Xavier},
     TITLE = {On postcritically finite unicritical polynomials},
   JOURNAL = {New York J. Math.},
  FJOURNAL = {New York Journal of Mathematics},
    VOLUME = {24},
      YEAR = {2018},
     PAGES = {1111--1122},
}

\bib{comp}{book}{
	author={Choe, Geon Ho},
	title={Computational Ergodic Theory},
	publisher={Springer},
	date={2005},
}

\bib {CallSilverman}{article}{
    AUTHOR = {Call, Gregory S.},
    author={Silverman, Joseph H.},
     TITLE = {Canonical heights on varieties with morphisms},
   JOURNAL = {Compositio Math.},
  FJOURNAL = {Compositio Mathematica},
    VOLUME = {89},
      YEAR = {1993},
    NUMBER = {2},
     PAGES = {163--205},
}

\bib {ChambertLoir}{article}{
    AUTHOR = {Chambert-Loir, Antoine},
     TITLE = {Mesures et \'{e}quidistribution sur les espaces de {B}erkovich},
   JOURNAL = {J. Reine Angew. Math.},
  FJOURNAL = {Journal f\"{u}r die Reine und Angewandte Mathematik. [Crelle's
              Journal]},
    VOLUME = {595},
      YEAR = {2006},
     PAGES = {215--235},
}

\bib{DKY}{article}{
   author={DeMarco, Laura},
   author={Krieger, Holly},
   author={Ye, Hexi},
   title={Uniform Manin-Mumford for a family of genus 2 curves},
   journal={Ann. of Math. (2)},
   volume={191},
   date={2020},
   number={3},
   pages={949--1001},
   issn={0003-486X},
}

\bib {FavreRiveraLetelier}{article}{
    AUTHOR = {Favre, Charles},
     author={Rivera-Letelier, Juan},
     TITLE = {\'{E}quidistribution quantitative des points de petite hauteur sur
              la droite projective},
   JOURNAL = {Math. Ann.},
  FJOURNAL = {Mathematische Annalen},
    VOLUME = {335},
      YEAR = {2006},
    NUMBER = {2},
     PAGES = {311--361},
}

\bib{Fili}{article}{
author = {Fili, Paul},
title = {A metric of mutual energy and unlikely intersections for dynamical systems},
journal = {Preprint}, 
pages = {arXiv:1708.08403v1},
}

\bib{invariant}{article}{
	author={Freire, Alexandre},
	author={Lopes, Artur},
	author={Ma\~{n}\'{e}, Ricardo},
	title={An invariant measure for rational maps},
	journal={Bol. Soc. Bras. Mat.},
	volume={14},
	number={1},
	date={1983},
	pages={45--62}
}

\bib{GNT}{article}{
    AUTHOR = {Gratton, Chad}
    author = {Nguyen, Khoa}
    author = {Tucker, Thomas J.},
     TITLE = {{$ABC$} implies primitive prime divisors in arithmetic
              dynamics},
   JOURNAL = {Bull. Lond. Math. Soc.},
  FJOURNAL = {Bulletin of the London Mathematical Society},
    VOLUME = {45},
      YEAR = {2013},
    NUMBER = {6},
     PAGES = {1194--1208},
}

\bib{Kawaguchi}{article}{
	author={Kawaguchi, Shu},
	author={Silverman, Joseph},
	title={Dynamics of projective morphisms having identical canonical heights},
	journal={Proceedings of the London Mathematics Society},
	volume={95},
	number={3},
	date={2007},
	pages={519-544},
}

\bib {LangDiophantine}{book}{
    AUTHOR = {Lang, Serge},
     TITLE = {Fundamentals of {D}iophantine geometry},
 PUBLISHER = {Springer-Verlag, New York},
      YEAR = {1983},
     PAGES = {xviii+370},
}
\bib {Ljubich}{article}{
    AUTHOR = {Ljubich, M. Ju.},
     TITLE = {Entropy properties of rational endomorphisms of the {R}iemann
              sphere},
   JOURNAL = {Ergodic Theory Dynam. Systems},
  FJOURNAL = {Ergodic Theory and Dynamical Systems},
    VOLUME = {3},
      YEAR = {1983},
    NUMBER = {3},
     PAGES = {351--385},
      ISSN = {0143-3857},
}

\bib {MortonSilverman}{article}{
    AUTHOR = {Morton, Patrick}
    author={Silverman, Joseph H.},
     TITLE = {Rational periodic points of rational functions},
   JOURNAL = {Internat. Math. Res. Notices},
  FJOURNAL = {International Mathematics Research Notices},
      YEAR = {1994},
    NUMBER = {2},
     PAGES = {97--110},
}

\bib{PST}{article}{
   author={Petsche, Clayton},
   author={Szpiro, Lucien},
   author={Tucker, Thomas},
   title={A dynamical pairing between two rational maps},
   JOURNAL = {Trans. Amer. Math. Soc.},
   Fjournal={Transactions of the American Mathematical Society},
   volume={364},
   date={2012},
   number={4},
   pages={1687--1710},
}

\bib{Silverman}{book}{
	author={Silverman, Joseph},
	title={The Arithmetic of Dynamical Systems},
	publisher={Springer},
	date={2007},
}

\bib{Zhang}{article} {
    AUTHOR = {Zhang, Shouwu},
     TITLE = {Small points and adelic metrics},
   JOURNAL = {J. Algebraic Geom.},
  FJOURNAL = {Journal of Algebraic Geometry},
    VOLUME = {4},
      YEAR = {1995},
    NUMBER = {2},
     PAGES = {281--300},
}

\end{biblist}
\end{bibdiv}

\end{document}